\documentclass[a4paper,11pt,pdftex,dvipsnames]{amsart}
\usepackage[margin=1.2in]{geometry}

\usepackage{amsmath,amssymb,amsfonts,amsthm}
\usepackage{eqnarray}
\usepackage{tikz, tikz-cd}
\usetikzlibrary{positioning}
\usepackage{caption, subcaption} 
\usepackage{enumerate}
\usepackage{geometry}
\usepackage{subcaption}
\usepackage{stackengine}
\usepackage{enumitem} 
\usepackage{adjustbox}
\usepackage{mathbbol}
\usepackage{verbatim}
\usepackage{xcolor}
\definecolor{darkgreen}{rgb}{0,0.50,0} 
\definecolor{darkred}{rgb}{0.55,0,0}
\definecolor{darkblue}{rgb}{0,0,0.6}
\definecolor{darkteal}{rgb}{0.0, 0.25, 0.5}
\usepackage[pdfborder=0,pagebackref,colorlinks,citecolor=darkgreen,linkcolor=darkteal,urlcolor=darkblue]{hyperref}
\usepackage{wrapfig
}

\usepackage{thmtools}
\usepackage{thm-restate}
\usepackage{todonotes}
\newcounter{todocounter}

\usepackage[capitalise]{cleveref}

\newcommand{\newrefformat}[2]{}

\crefname{lemma}{Lemma}{Lemmas}
\crefname{theorem}{Theorem}{Theorems}
\crefname{definition}{Definition}{Definitions}
\crefname{proposition}{Proposition}{Propositions}
\crefname{remark}{Remark}{Remarks}
\crefname{corollary}{Corollary}{Corollaries}
\crefname{equation}{Equation}{Equations}
\crefname{ex}{Example}{Examples}
\crefname{appsec}{Appendix}{Appendices}

\newtheorem{lemma}{Lemma}
\newtheorem{theorem}[lemma]{Theorem}
\newtheorem{corollary}[lemma]{Corollary}
\newtheorem{proposition}[lemma]{Proposition}
\theoremstyle{definition}
\newtheorem{definition}[lemma]{Definition}
\newtheorem{example}[lemma]{Example}

\newtheorem{thmx}{Theorem}

\newcommand{\renee}[1]{{\color{magenta}{#1}}}

\newcommand{\gillout}{\bgroup\markoverwith
	{\textcolor{darkgreen}{\rule[.5ex]{2pt}{0.4pt}}}\ULon}
\newcommand{\reneeout}{\bgroup\markoverwith
	{\textcolor{magenta}{\rule[.6ex]{3pt}{0.6pt}}}\ULon}

\newcommand{\T}{\mathcal{T}}
\newcommand{\BHV}{\mathrm{BHV}}
\newcommand{\Rp}{\mathbb{R}_{\geq 0}}

\newcommand{\dpp}{\mathbb{d}_P}
\newcommand{\dhh}{\mathbb{d}_H}

\begin{document}

\title[Nested tree space: a geometric framework for co-phylogeny]{Nested tree space: \\a geometric framework for co-phylogeny}

\author[G. Grindstaff, R. S. Hoekzema]{Gillian Grindstaff$^{1, 2}$, Renee S. Hoekzema$^{3}$}
\address{$^1$Mathematical Institute, University of Oxford\\
$^2$ Department of Mathematics, UCLA\\
$^3$ Department of Mathematics, Vrije Universiteit Amsterdam}

\email{r.s.hoekzema@vu.nl}

\date{\today}


    \begin{abstract}
Nested (or reconciled) phylogenetic trees model co-evolutionary systems in which one evolutionary history is embedded within another. We introduce a geometric framework for such systems by defining $\sigma$-space, a moduli space of fully nested ultrametric phylogenetic trees with a fixed leaf map.

Generalizing the $\tau$-space of Gavryushkin and Drummond \cite{ultra}, $\sigma$-space is constructed as a cubical complex parametrised by nested ranked tree topologies and inter-event time coordinates of the combined host and parasite speciation events. We characterise admissible orderings via binary {\em nesting sequences} and organise them into a natural poset. We show that $\sigma$-space is contractible and satisfies Gromov’s cube condition, and is therefore CAT(0). In particular, it admits unique geodesics and well-defined Fréchet means. We further describe its geometric structure, including boundary strata corresponding to cospeciation events, and relate it to products of ultrametric tree spaces via natural forgetful maps.
\end{abstract}

\maketitle

\section{Introduction}

Phylogenetic reconciliation studies the common evolution of co-evolving biological entities where one evolution is taking place inside the other, with the aim to model the phylogenetic descent of the pair by ``a tree inside a tree''
\cite{menet2022phylogenetic}. 
Two classical examples are host-parasite systems and genes inside evolving species, but other systems include symbionts inside hosts and species evolving in separating geographical areas \cite{page1998trees}. More recently, similar ideas have been applied in the study of phylosymbiosis, the common evolution of species with their microbiome \cite{phylosymbiosis,Brooks2016, Kohl2020Phylosymbiosis}.
In the most restrictive scenario, the evolution of the associate is strictly bound to the evolution of the host, such that the common evolution can be thought of as a \textit{reconciled} or \textit{nested tree}. 
The current work studies reconciled phylogenetic trees as points in a geodesic space of all possible nested trees, providing a foundation for the development of geometrically-informed statistics.

The co-evolutionary relationship between hosts and their associates— parasites,
commensals, endosymbionts, or microbiomes—is widely studied
in biology. Fahrenholz's rule, formalised in the 20th century, poses that
parasite phylogenies recapitulate host phylogenies, and this motivated large-scale comparative studies supporting the hypothesis in certain systems \cite{brooks1988,hafner1988}.
The general assumption is that a host speciation forces a \emph{cospeciation} in the parasite tree, such that host and parasite lineages diverge simultaneously.
Three evolutionary events can disrupt strict mirroring, however: \emph{duplication}, in
which the parasite speciates independently within a host lineage;
\emph{host-switching} (horizontal transfer), in which a parasite moves to a
phylogenetically separated host; and \emph{sorting events}, in which a parasite
fails to colonise one of the two lineages produced by a host speciation event.
These events and their consequences for
tree topology have been studied extensively since Page~\cite{page1994, Page2003} and the Jungles algorithm of
Charleston~\cite{charleston1998}, with many subsequent algorithmic and
statistical advances~\cite{conow2010,santichaivekin2021}. An analogous problem arises in molecular evolution: gene trees embedded
within species trees can differ topologically due to gene duplication, horizontal
transfer, loss, or incomplete lineage sorting~\cite{maddison1997,PageCharleston1997}. 
In the
gene/species tree context, pairs of genes that diverged via speciation are
called \emph{orthologs}, while pairs arising from gene duplication are called
\emph{paralogs}.

We focus our attention in this work on the regime of only sorting, duplication and cospeciation; that is, horizontal transfer
(host-switching)
events are absent or negligible. This allows us to assume that the parasite tree can be fully
reconciled with the host tree ~\cite{page1994}.
Gene trees within eukaryotic species satisfy this assumption automatically, since
horizontal gene transfer is rare in the absence of bacterial-style conjugation.

The study of phylogenetic trees as points in a metric space was initiated with the introduction of the \emph{BHV space} by Billera, Holmes, and
Vogtmann~\cite{BHV}, and has since been extended to a wide range of phylogenetic tree spaces with varying assumptions and metrics.
For \emph{timed} (ultrametric) phylogenetic trees, i.e. trees in which all leaves are
equidistant from the root, so that internal nodes carry absolute divergence
times, Gavryushkin and
Drummond~\cite{ultra} introduced two natural metric spaces:
the \emph{$t$-space}, parametrised by absolute node times, and the
\emph{$\tau$-space}, parametrised by inter-coalescent intervals $\tau_i =
t_i - t_{i-1}$.  
Both spaces are CAT(0)\footnote{ The non-positive
curvature (CAT(0)) condition implies that any two points are connected by a
unique geodesic, which in turn enables the definition of Fr\'echet means and
other geometric statistics for samples of trees~\cite{OwenProvan2011, miller2015}, including principal component analysis~\cite{Nye2011},
confidence regions~\cite{willis2019}, and kernel density
estimation~\cite{weyenberg2014}.
}, with $\tau$-space additionally
admitting a polynomial-time geodesic algorithm via the Owen--Provan
method~\cite{OwenProvan2011}.  The $\tau$-space admits transitions between orthants
both through NNI moves and through \emph{rank changes}—interchanges in the chronology of two speciation events.  Extensions of this framework to phylogenetic
networks~\cite{HuberMoultonMurakami2023} and to discrete time-trees~\cite{GavryushkinWhiddenMatsen2018}
have since been introduced.

In this paper we introduce and study \emph{$\sigma$-space}, a moduli space for pairs of
ultrametric phylogenetic trees $(T_H, T_P)$ in which $T_P$ can be mapped into $T_H$ via the \emph{parasite map} $\ell: P \to H$ that sends
parasite species to their appropriate host species, and extends uniquely to a time-respecting immersion of the parasite tree inside the host tree. 
Such nested pairs encode reconciled
host--parasite (or species--gene) evolutionary histories without host-switching.
We build $\sigma$-space as a cube complex from orthants corresponding to resolved nested ranked tree topologies, and parametrise them using the refinement of $\tau$-coordinates: $\{t_i\} = \{t^H_j\}\cup\{t^P_k\}$ and $\sigma_i = t^{s_i} - t^{s_{i-1}}$.
We moreover give a detailed description of low-dimensional $\sigma$-spaces.
A key input in our approach is the \emph{nesting sequence} 
$S \in \{H,P\}^{n+m-2}$
recording the subsequence of speciation
events corresponding to each tree. 

Our main result is that, given any number of parasites and hosts and mapping of the parasite into the host leaves, the
link of the origin in the corresponding $\sigma$-space contains no
     3-cycles (Proposition~\ref{prop:no3-cycle}), and we use Gromov's cube condition
     to conclude that:
\begin{thmx}\label{introthm:CAT(0)}[\cref{{thm:CAT(0)}}]
$\sigma$-space is CAT(0).
\end{thmx}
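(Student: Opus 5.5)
The strategy is Gromov's link condition for cube complexes. A finite-dimensional cube complex with the piecewise Euclidean metric is CAT(0) exactly when it is simply connected and the link of every vertex is a flag simplicial complex. For complexes built from orthants glued along faces, as in $\tau$-space, this reduces to the link of the origin. We take the orthant structure and the contractibility of $\sigma$-space as given. The plan has four steps.

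\textbf{Step 1: the complex.} We realise $\sigma$-space as a cubical (orthant) complex. Each resolved nested ranked topology, i.e. a nested ranked tree pair together with its nesting sequence $S \in \{H,P\}^{n+m-2}$, gives a copy of $\Rp^{n+m-2}$ with coordinates $\sigma_i$. A coordinate $\sigma_i \to 0$ corresponds to one of three things:
\begin{itemize}
\item collapsing two consecutive events of the same tree (a rank change or an NNI-type degeneration);
\item interchanging an adjacent host event and parasite event, which changes the nesting sequence;
\item a cospeciation stratum on the boundary.
\end{itemize}
Faces are identified when the degenerate trees agree. From this description we check that the complex is locally finite and finite-dimensional. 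By Bridson's theorem it is then a complete geodesic space, so CAT(0) reduces to simple connectivity plus the local condition.

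\textbf{Step 2: simple connectivity.} This comes from contractibility. The straight-line homotopy $\sigma \mapsto s\sigma$ scales every orthant to the origin. It is compatible with the face identifications because they are linear, and it fixes the cone point. So the space is a cone and in particular simply connected.

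\textbf{Step 3: reduce to the link of the origin.} Every point lies in the relative interior of some face. Its link is a join of a sphere with the link of that face in the link of the origin. Joins of flag complexes with spheres are flag. Hence it suffices that the link $L$ of the origin is flag. $L$ is the simplicial complex whose vertices are the rays (one-coordinate degenerations $\sigma_j>0$, all others $0$) and whose simplices are the coordinate subsets of a single orthant.

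\textbf{Step 4: flagness of $L$.} This is the main obstacle. By construction, Proposition~\ref{prop:no3-cycle} says $L$ contains no empty 3-cycles. That is, whenever three rays pairwise span 2-faces, some orthant contains all three. A higher-dimensional version would follow by the same reasoning. Suppose rays $r_1,\dots,r_k$ pairwise lie in common orthants. Each ray records a partial chronological information, namely a ``cut'' of the ordered event set into earlier and later events, with nesting data. Pairwise compatibility means the cuts are nested, so they form a chain. A chain of compatible cuts, together with the nesting constraints verified pairwise, assembles into one nested ranked topology. The no-3-cycle argument shows that pairwise nesting constraints suffice for triples. The general case then goes by induction on $k$: merge two cuts at a time, using the triple case to keep compatibility with all remaining rays. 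This is the step I expect to require care, specifically the interaction of host/parasite transpositions in $S$ with the immersion $\ell$.

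With $L$ flag, Gromov's cube condition gives local CAT(0). Combined with simple connectivity from Step 2, the Cartan--Hadamard theorem yields that $\sigma$-space is CAT(0).
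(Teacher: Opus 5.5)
Your reduction is fine: the orthant complex is a cone, so CAT(0) follows once the link $L$ of the origin is flag. This is a different organisation from the paper, which checks Gromov's cube condition directly for each $k$. But Step 4 carries all the content, and it is not established.

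First, Proposition~\ref{prop:no3-cycle} does not say what you quote. It excludes three \emph{top-dimensional} orthants pairwise sharing distinct facets, i.e.\ three maximal simplices of $L$ pairwise adjacent along codimension-one faces. Such a configuration is an empty triangle in the link of a codimension-two simplex of $L$, which is the case $k=n+m-4$ of the cube condition. It says nothing about three rays pairwise spanning $2$-faces, which is the case $k=0$.

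Second, even a genuine no-empty-triangle statement for $L$ itself would not give flagness by an induction that merges two cuts at a time. The boundary of a tetrahedron has no empty triangle but is not flag; the failure only shows up in the link of a vertex.

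Third, a cut of ``the ordered event set'' is defined relative to one orthant's ordering. So ``the cuts are nested'' has no meaning for rays from different orthants until you describe rays intrinsically.

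That intrinsic description is the missing idea, and with it you need neither the triangle case nor an induction. Write $X_k=(\pi^H_k,\pi^P_k)$ for the pair of host and parasite leaf partitions after the first $k$ events of a resolved nested ranked topology. On the face of its orthant where exactly $\sigma_{j_1},\dots,\sigma_{j_r}$ are positive, the states at time $0$ and at the successive event times are $X_{j_1-1},\dots,X_{j_r-1}$ and then the fully merged state. This chain determines the nested tree.

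Since faces are glued by equality of trees, the vertices of $L$ are exactly the pairs $(\pi^H,\pi^P)$, other than the fully merged one, such that $\ell(B)$ lies in a single block of $\pi^H$ for every $B\in\pi^P$. This condition is precisely host--parasite compatibility. The simplices of $L$ are the chains of such pairs that occur inside one maximal chain of single events. In particular, rays sharing an orthant are comparable under componentwise refinement.

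Conversely, if $X\le Y$ are valid pairs, you can pass from $X$ to $Y$ by single legal events. First do the host merges from $\pi^H_X$ to $\pi^H_Y$, then the parasite merges from $\pi^P_X$ to $\pi^P_Y$. Each parasite merge is legal because every intermediate parasite block lies in a block of $\pi^P_Y$, hence in one block of $\pi^H_Y$. So every chain extends to a resolved nested ranked topology, and $L$ is the order complex of this poset with its top removed. Order complexes are flag.

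With this lemma in place, your Steps 1--3 finish the proof. The resulting argument is more uniform than the paper's case analysis of the moves (H1)--(HP) followed by its ``resolve the shared coordinate'' construction.
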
     
     We therefore can compute unique geodesics and Fr\'echet means in $\sigma$-space.

We furthermore characterise which nesting sequences are \emph{admissible} for a given
    map on leaves (Lemma~\ref{lem:nestingseq}), and identify a poset
    structure on admissible nesting sequences interpolating between the
    \emph{interleaved} case (maximally cospeciating) and the \emph{decoupled}
    extreme (only duplication events).

We study the geometry of $\sigma$-space showing that cospeciation events correspond to external boundaries, and identify the \emph{domain of perfect cospeciation} as a geometrically distinguished locus corresponding to the maximal intersection of cospeciation boundaries.

The closest precursor to our work is the work by Huggins, Owen, and Yoshida~\cite{huggins2012}, who study a space of cophylogenetic trees as a subspace of a product of BHV-spaces. We relate our work to theirs (and \cite{ultra}) via the forgetful map $F$, which forgets the parasite map and sends a nested tree to the product of individual $\tau$-spaces for the host and the parasite tree. This is a homeomorphism onto a subspace of pairs of trees that can be nested,
with partial inverse given by appending $\ell.$
Our work goes beyond theirs in giving a combinatorial and geometric description of the co-phylogeny subspace, defining an intrinsic metric, and adopting the ultrametric assumption, which may be more appropriate to phylogenies with different evolutionary scales.

\subsection*{Acknowledgements}
GG was supported by NSF MSPRF 2202895.
RH was supported by the Dutch Research Council (NWO) through the grant VI.Veni.212.170.

\section{Phylogenetic trees and their moduli spaces}\label{sec:tree}

We briefly recall some definitions of phylogenetic trees and the geometric models used to compare them. 

\subsection{Phylogenetic trees}
 
\begin{definition}
  An (ultrametric) \emph{phylogenetic tree} $T$ is a finite weighted tree with a distinguished root,  no other degree 2 vertices, and all degree--$1$ vertices (the \emph{leaves}) are
  \begin{itemize}
      \item bijectively labelled by a set $L$ of distinct taxa.
      \item A fixed path distance $h$ from the root.
  \end{itemize}
  $h$ is called the {\em height} of the tree.
\end{definition}
Ultrametric trees are completely determined by the the {\em path distance} on the leaves; that is, $d_T(l_1,l_2)$ is the sum of the edge weights on the (unique) path from $l_1\to l_2$ in $T$, so that $d_T(i,j)$ is twice the height of the most recent common ancestor (MRCA) of $i$ and $j$. By a classical result, the set of $d_T(i,j)$ values is a finite metric which satisfies the {\em ultrametric condition}:

\begin{definition}
    A metric $d_T:L\times L \rightarrow \mathbb{R}_{\geq 0}$ on a finite set $L$ is {\em ultrametric} if it satisfies the strong triangle inequality
\[
d_T(x,y) \le \max\{d_T(x,z), d_T(y,z)\} \qquad \text{for all } x,y,z \in L.
\]

\end{definition}
The tree $T$ can conversely be seen as the geometric or graphical {\em realisation} of $d_T$. Abusing notation slightly, we will use $d_T$ to represent both the metric and its matrix of pairwise distances.
For ultrametric trees we can think of the leaves as having a \emph{time} $t_0,$ which is often 0 by convention to represent the present, the root as having time $h$ (i.e. $h$ years ago), and each point $x\in T$ (including the edges and their interior) has time \[t_x = t_0 + \min_{l\in L_T} \{d_T(l,x)\}\leq h.\]
This gives the tree $T$ and all of its edges an orientation toward the root and backward in time. The \textit{leaf path} is the unique path $\gamma_l:[0,\infty]\to T$ from each leaf toward the root going linearly backward in time at unit speed (increasing in height) (\cref{fig:tree_ex}). 
Each node has precisely one edge oriented outward toward the root, and we can also consider an implied infinite edge from the root extending up backwards in time. 

We can describe the {\em descendants} of a node or edge as all leaf labels that are connected to it by an oriented path. The {\em in-degree} of a node $v$ is the number of incident edges oriented toward $v$; it also represents the number of populations speciating at $v.$
\begin{definition}
A \emph{ranked tree topology} on $n$ taxa $rt_n$ consists of an unweighted rooted tree on leaves $[n]:=\{1,2,\dots n\}$, together with a total ordering, the {\em rank}, on internal nodes that respects the time orientation of all edges toward the root, which always has the highest rank. It can also be characterised by a sequence of partitions associated to each internal node:
\[\{x_1 | x_1' |\dots |x_1^{(d_1)}\}, \{x_2 | \dots| x_2^{(d_2)}\}\dots,\{x_N | \dots | x_N^{(d_N)}\},\]
where for each node $v$ with ranking $i$, $d_i$ is the in-degree $\deg(v)-1$ of $v$, and the partition, or {\em split} $\{A | A' |\dots |A^{(d_v)}\}$ represents the descendant sets along each in-edge of $v$.  We have in all cases that the final split comes at the root, with $\cup_i x^{(i)}_{N} = [n]$. 

If each internal node has in-degree 2, we say $rt_n$ is binary or {\em fully resolved}. 
\end{definition}

 Each ranked topology $rt_n$ is an equivalence class of trees under label- and rank-preserving homeomorphisms. Conversely, phylogenetic trees can be fully described by their ranked topology $rt_n$
together with the time steps \[t_1\leq t_2\leq \dots \leq t_{n-1}\leq h\] of each node.

\begin{figure}
    \centering
	    \begin{tikzpicture}[scale=1]
        \node at (-3,0)  {\includegraphics[width=0.4\linewidth]{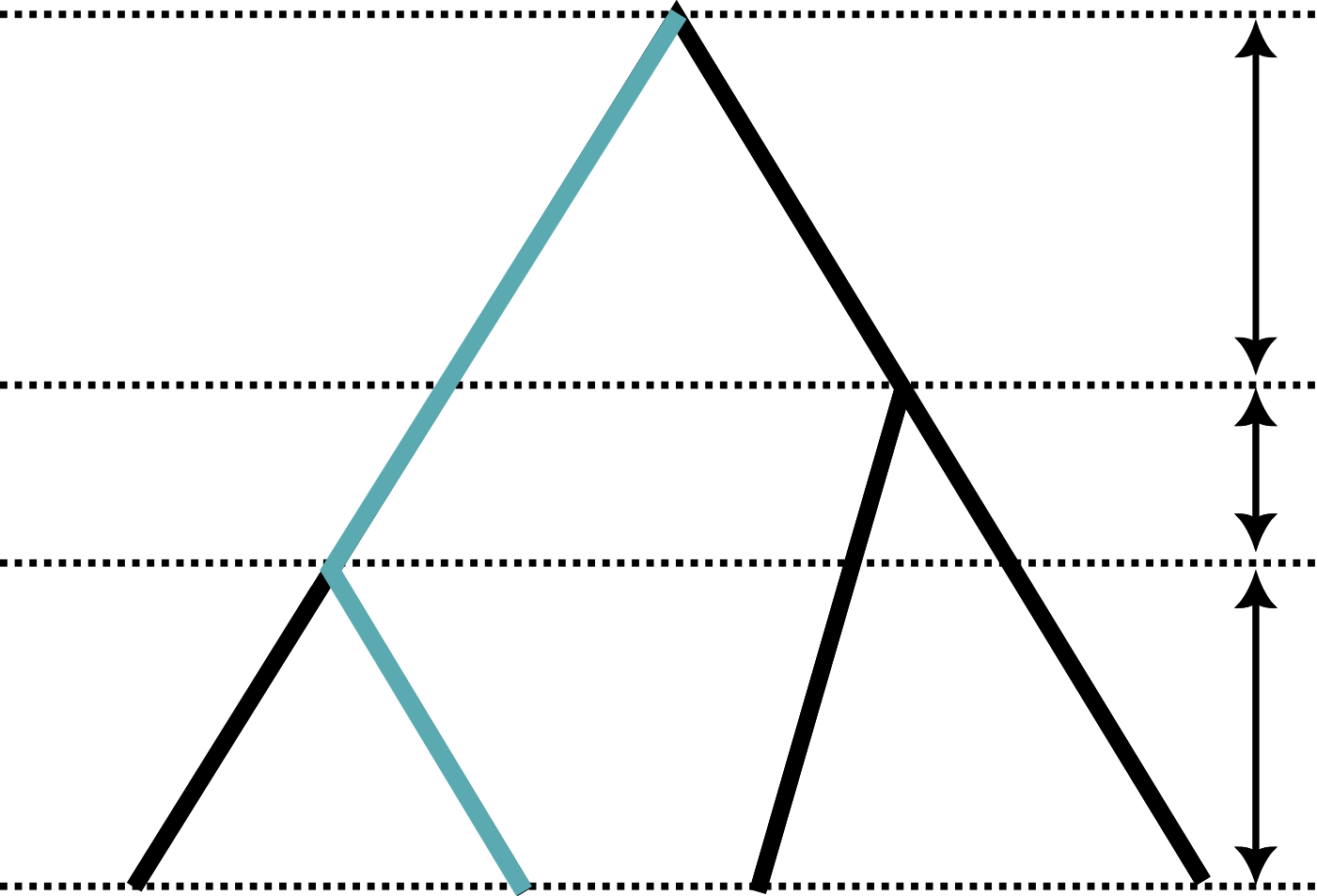}};
       \node at (-5.4,-2.2) {$a$};
        \node at (-3.5,-2.2) {$b$};
        \node at (-2.6,-2.2) {$c$};
        \node at (-0.6,-2.2) {$d$}
        ;
        \node at (-6.3,-2) {$t_0$};
       \node at (-6.3,-0.5) {$t_1$};
        \node at (-6.3,0.3) {$t_2$};
        \node at (-6.7,2) {$t_3=h$}
        ;
         \node at (0,-1.3) {$\tau_1$};
         \node at (0,-0.1) {$\tau_2$};
          \node at (0,1) {$\tau_3$}
          ;
        \node at (-4,1) {\textcolor{teal}{$\gamma_b$}};
    \draw[->, thick] (1.5,-2.2) -- (1.5,2.2)
    node[pos=0.3, right, xshift=-10pt, rotate=90] {orientation};
    \draw[->, teal, dotted, line width=1.5pt] (-2.9,2) -- (-2.9,3);
\end{tikzpicture}

    \caption{Example rooted ultrametric tree illustrating speciation times, $\tau$-coordinates and the leaf path $\gamma_b$ for the leaf labeled $b$.
    }
    \label{fig:tree_ex}
\end{figure}

\subsection{$\tau$-space of ultrametric trees}
In the current paper, we will be inspired for our construction of a space of reconciled cophylogenetic trees by the $\tau$-space parametrisation of tree space from \cite{ultra}.

Let $RT_n$ be the set of all ranked topologies of fully resolved phylogenetic trees on $n$ taxa, where fully resolved now means that the tree is binary and its internal nodes all have different height. There are \[\prod_{k=2}^{n} \binom{k}{2}=\frac{(n-1)!n!}{2^{n-1}}\] such ranked topologies, 
as seen easily by considering pairing (coalescing) lineages upwards in time. 
The $\tau$-space parametrisation of \cite{ultra} describes phylogenetic trees using their ranked topology and the differences \[\tau_i = t_i-t_{i-1}\]  of node times for $i = 1, \dots, n-1$ (see \cref{fig:tree_ex}). A tree with $n$ leaves is described by a pair $(rt(T), \overline{\tau})$ with $rt(T) \in RT_n$ and $\overline{\tau}\in \mathbb{R}_{\geq 0}^{n-1}$. 
The space of all such trees $\T_n$ is then constructed from the positive orthants $\mathbb{R}_{\geq 0}^{n-1}$ for every ranked topology by gluing along the faces where some of the coordinates $\tau_i$ are zero, in such a way that the ranked tree topologies of the orthants agree. 
The origin of all orthants are identified as it corresponds to the degenerate tree with height 0.

If we bound all $\tau_i$'s by a large positive constant, we can now consider $\T_n$ as a cubical complex. Such bounds in this and later spaces will be irrelevant for our results in the paper, and we will interchangeably use the terms cube and orthant.

$\T_n$ is naturally endowed with a metric: a distance $g(T_1,T_2)$ between two phylogenetic trees. When $T_1$ and $T_2$ have the same topology, this corresponds to taking the norm of the difference of their parametrisation vectors $\overline{\tau}$. When the topologies differ, we rely on the connectivity of the space. We define the metric $g$ of $\T_n$ as the path length infimum in $\tau$-space.
The pair $(\mathcal{T}_n, g)$ is then a metric space, and we can ask about its metric properties. 
It was shown in \cite{ultra}, in analogy with the same result for BHV space, that $\tau$-space is $CAT(0)$, or non-positively curved, as a cube complex. This implies that a unique geodesic exists between any two trees, which allows for the definition of Fr\'echet means among other statistical results. Geodesics in $\tau$-space can be computed in polynomial time using a modification of the algorithm for $\BHV$ space in \cite{owen2010fast}.

Since $\tau$-space, like $\BHV$ space, is formed by gluing contractible spaces along contractible subspaces, the spaces are contractible. However, their non-trivial topology becomes visible by looking at the link of the origin.
\begin{definition}
    The link of a vertex in a cube complex is the simplicial complex obtained by intersecting the complex with a small sphere around the vertex.
  \end{definition}

Transitions from one orthant to another through a codimension 1 face are described by a \textit{nearest neighbor interchange (NNI)} in the tree topology, illustrated in Figure \ref{fig:NNI}, or if the corresponding ranked topologies are related by a \textit{rank change} (Figure \ref{fig:rankint}), where the tree topology is unchanged but two nodes exchange their order. Codimension 1 faces in $\T_n$ can be part of one, two, or three cubes. Setting $\tau_1= 0$ corresponds to an external boundary of the cube complex. 
If two internal nodes have the same rank, then this can be resolved in two ways, thus a rank change corresponds to the adjacency of two cubes. A degree 4 (in-degree 3) node in the tree can be resolved into a binary tree in three ways corresponding to choosing the more closely related pair of the three downwards branches, thus an NNI corresponds to three cubes meeting at an $(n-2)$-dimensional face. In general, a codimension $k$ face of an orthant corresponds to the vanishing of $k$ of the $\tau$-coordinates.

The $\tau_1$ coordinate does not interact with the topology of the tree since it only captures the time between the last node and the leaves. Omitting $\tau_1$ gives rise to the $(n-2)$-dimensional space $\T_n^0$. Adding $\tau_1$ back in simply corresponds to a product: $\T_n = \T_n^0\times \mathbb{R}_{\geq0}$.

\begin{figure}[h]
    \centering 
    \begin{subfigure}[t]{0.45\textwidth}
        \centering
	    \begin{tikzpicture}[scale=1]
        \node at (-3,0)  {\includegraphics[height=0.28\textwidth]{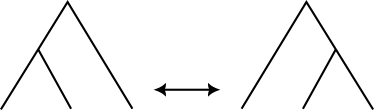}};
        \node at (-6.2,-1.1) {$a$};
        \node at (-5,-1.1) {$b$};
        \node at (-3.9,-1.1) {$c$};
        \node at (-2.1,-1.1) {$a$};
        \node at (-1,-1.1) {$b$};
        \node at (0.2,-1.1) {$c$};
\end{tikzpicture}
        \caption{Nearest neighbour interchange (NNI).}
        \label{fig:NNI}
    \end{subfigure}%
    \hspace{1cm}
    \begin{subfigure}[t]{0.45\textwidth}
        \centering
        	    \begin{tikzpicture}[scale=1]
        \node at (-2.5,0)  {\includegraphics[height=0.28\textwidth]{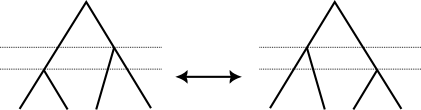}};
        \node at (-5.8,-1.1) {$a$};
        \node at (-4.9,-1.1) {$b$};
        \node at (-4.5,-1.1) {$c$};
        \node at (-3.5,-1.1) {$d$};
        \node at (-1.5,-1.1) {$a$};
        \node at (-0.5,-1.1) {$b$};
        \node at (-0.1,-1.1) {$c$};
        \node at (0.8,-1.1) {$d$};
\end{tikzpicture}
        \caption{Rank change.}
        \label{fig:rankint}
    \end{subfigure}
    \caption{Moves on ranked tree topologies.}
    \label{fig:orthantmoves}
\end{figure}

\section{Nested trees}\label{sec:nestedtrees}

We now introduce analogues of the definitions in \Cref{sec:tree} for nested pairs of phylogenetic trees, that is, reconciled trees of hosts and associates/parasites, in order to define a moduli space of nested trees $\Sigma$ in \Cref{sec:nestedspace}.

\subsection{Nested ultrametric trees}
\begin{definition}
    Consider two ultrametric distance matrices $d^H$ and $d^P$, and a set map on leaves $\ell : L(P)\rightarrow L(H)$. We say that the realizations $T_H$ and $T_P$ are {\em host-parasite compatible along $\ell$} if 
\begin{eqnarray}
    \label{eqn:metricmatrix}  
    &&d^P(i,j) \geq d^H(\ell(i),\ell(j))\hspace{4em}\mbox{ for all }\hspace{0.5em}i, j\in L(P).
\end{eqnarray}

\end{definition}
Note that this definition is not symmetric, and it implies that the height $h_P\geq h_H.$
\begin{definition}\label{def:nestedtree}
    A (once) \textit{nested phylogenetic tree} is a tuple $(T_H,T_P,\ell)$ such that $T_H$ and $T_P$ are host-parasite compatible along $\ell$. We call $T_H$ the {\em host tree}, $T_P$ the {\em parasite tree}, and $\ell$ the {\em parasite map} that maps leaves of the parasite tree to their host species in $L(H)$. The {\em height} of $(T_H,T_P,\ell)$ is the height of $T_P$.

    We refer to such a tree as being of {\em type} $(n,m,\ell)$.

\end{definition}

   If $T_H,T_P$ are host-parasite compatible along $\ell,$ then $\ell$ extends uniquely to a map \[\hat{\ell}:T_P\rightarrow T_H.\] 

   \begin{definition}   
   For $x\in T_P,$ let $l\in L(T_P)$ be any descendant of $x$ (without loss of generality due to ultrametric condition) and take the unit speed path from the leaf through the root $\gamma_l:[0,\infty]\to T_P$ such that $\gamma_l(0) = l$ and $\gamma_l(h_x) = x$, where $h_x$ is the height of $x$. Then define $\hat{\ell}(x):=\gamma_{\ell(l)}(h_x)\subset T_H$. We call $\hat{\ell}$ the \textit{realisation} of the parasite map.
   \end{definition}  

Let $T$ be a nested phylogenetic tree that is \textit{fully resolved}, by which we mean that both $T_H$ and $T_P$ are each fully resolved and none of their nodes have the same time. For $n$ and $m$ the respective number of non-root leaves, we denote by 
\[t^H_1 < t^H_2 < \dots <t^H_{n-1} <  h\]
the ordered heights of internal nodes in the host tree, and by
\[t^P_1 < t^P_2 < \dots <t^P_{m-1} = h\]
the ordered heights of internal nodes in the parasite tree. Let $N = n + m$
Together, the nested tree has an ordering
\[t^{s_1} < t^{s_2} < \dots < t^{s_{N-2}}=h\]
where each $t^{s_j} \in \{t^P_i\}\cup \{t^H_i\}$.
\subsection{Nesting sequences}
\begin{definition}
    For a fully resolved nested tree with $t-$coordinates $\{t^H_i\}$ and $\{t^P_j\}$, the
    \emph{nesting sequence} $S \in \{P,H\}^{N-2}$ is the sequence such that $S_i = P$ if $t^{s_i}$ is the time of a speciation event in $T_P,$ and $S_i = H$ if $t^{s_i}$ is the time of a speciation event in $T_H.$
\end{definition}

If the parasite map is injective when restricted to leaves, then the nesting sequence of any resolved nested tree starts with $S_1 = H$ and ends with $S_{N-2} = P$. 
Moreover, in this case at any point in the nesting sequence, there are equally many or more H's than P's to the left. We now generalise this statement for the non-injective case.
\begin{definition}
We call $\dhh=|\ell(L_P)|\leq n$ the \emph{host degree} and $\dpp = m- \dhh$ the \textit{parasite multiplicity}.
\end{definition}
The host degree is the number of host leaves that contain at least one parasite and the parasite multiplicity is the number of merges that can occur in the parasite tree before the first coalescence in the host tree occurs.

Given a tuple $(n,m,\ell)$, we call a nesting sequence $S$  {\em admissible} if it occurs as the nesting sequence of any nested tree of type $(n,m,\ell)$.  
\begin{lemma}\label{lem:nestingseq}
A nesting sequence $S$ is admissible for trees of type $(n,m,\ell)$ if and only if for any $i\in \{1, \dots , n+m-2\}$, 
\begin{equation}
    |\{S_j =P\}_{j\leq i}|\leq |\{S_j = H\}_{j\leq i}|+\dpp.\label{eq:nestingcond}
\end{equation}
\end{lemma}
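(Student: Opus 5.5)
We prove the two directions separately, in both cases counting the lineages alive in the host and parasite trees just above each event time. For a resolved nested tree of type $(n,m,\ell)$ and a time $t$ strictly between consecutive event times $t^{s_i}<t<t^{s_{i+1}}$, let $h(t)$ denote the number of host lineages and $p(t)$ the number of parasite lineages crossing time $t$. Write $a_i=|\{S_j=H\}_{j\le i}|$ and $b_i=|\{S_j=P\}_{j\le i}|$. Since every event is a binary coalescence in exactly one tree, $h(t)=n-a_i$ and $p(t)=m-b_i$.

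For necessity, we use the realisation $\hat\ell$. At time $t$, each parasite lineage is mapped by $\hat\ell$ into some host lineage. Moreover, the host lineages hit are exactly the ancestors at time $t$ of the host leaves in $\ell(L_P)$. Only the ancestors of the $\dhh$ occupied leaves can carry parasites, and every parasite lineage sits in a distinct parasite subtree. The key claim is that the number of host lineages containing at least one parasite is at most $p(t)$, and is at least $\dhh - a_i$: each host coalescence reduces the number of occupied host lineages by at most one. Since the occupied host lineages form a nonempty subset of the parasite images, we get $p(t)\ge \dhh-a_i$, that is, $m-b_i\ge \dhh-a_i$. This gives $b_i\le a_i+\dpp$, which is \eqref{eq:nestingcond}. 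I will phrase this as the surjectivity of $\hat\ell$ from parasite lineages onto occupied host lineages at each time.

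For sufficiency, given $S$ satisfying \eqref{eq:nestingcond}, we construct a nested tree inductively from the leaves upward, choosing times $t^{s_i}=i$. We maintain the state: a set of host lineages, each labelled by the multiset of parasite lineages it contains, with the invariant that the number of occupied host lineages is $\max(\dhh-a_i,1)$ or less. The construction proceeds as follows.
\begin{itemize}
\item At an $H$ step, we merge two occupied host lineages if at least two exist; otherwise we merge an unoccupied one with any lineage. This keeps the count of occupied lineages as small as possible.
\item At a $P$ step, we need two parasite lineages in the same host lineage. Such a pair exists exactly when $p>$ (number of occupied host lineages), and \eqref{eq:nestingcond} gives $m-b_{i-1}-1\ge \dhh-a_{i-1}-1$. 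Strictness is what we must check: we need $m-b_{i-1}>\dhh-a_{i-1}$, which is condition \eqref{eq:nestingcond} at index $i$.
\end{itemize}
We also have to ensure that the host root is formed before the final parasite event and that unoccupied host lineages get merged. This holds because the total counts are fixed, so $S$ ends with the right numbers of each letter.

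The main obstacle is the bookkeeping in the sufficiency direction, in particular showing that the greedy host-merging rule keeps the occupied count exactly at $\max(\dhh-a_i,1)$ so that the inequality suffices. A subtler point is when unoccupied host leaves exist ($\dhh<n$). Early $H$ steps must then merge occupied lineages preferentially, and we must check that \eqref{eq:nestingcond} is phrased with $\dhh$ rather than $n$ precisely so that this works. Finally, we verify the compatibility condition \eqref{eqn:metricmatrix} from the construction: two parasites coalesce only inside a common host lineage, which forces $d^P\ge d^H$.
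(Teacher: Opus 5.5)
Your proof is correct, but it is organised differently from the paper's. The paper proves sufficiency in two steps. It builds a single nested tree with the interleaved sequence $S_{int}=P^{\dpp}(HP)^{\dhh-1}H^{n-\dhh}$, noting that this is the extremal sequence the inequality permits. It then reaches every other admissible sequence by rank changes that push a parasite event above the next host event, keeping both ranked topologies fixed. Its necessity direction is only sketched (``the first $P^{\dpp}$ exhaust additional parasite lineages'').

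You instead make necessity an explicit count: $\hat\ell$ maps the $m-b_i$ parasite lineages onto the occupied host lineages, and there are at least $\dhh-a_i$ of those. You then prove sufficiency by a greedy construction for each $S$ separately, using pigeonhole at the $P$ steps.

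Your route is self-contained. It does not need two facts the paper uses without proof: that such rank changes preserve host--parasite compatibility, and that every sequence satisfying the inequality is reachable from $S_{int}$ by $PH\to HP$ swaps. The latter amounts to $S_{int}$ maximising every prefix count of $P$'s, together with the swap order being the dominance order on lattice paths. The paper's route, in exchange, exhibits the admissible sequences directly as the up-set of $S_{int}$ in $(\mathcal{NS},\le)$. It also shows that a fixed pair of ranked topologies stays compatible under upward moves, which is what the subsequent up-set lemma actually needs.

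Two small corrections:
\begin{itemize}
\item At a $P$ step where the occupied count is $1>\dhh-a_{i-1}$, the inequality you check, $m-b_{i-1}>\dhh-a_{i-1}$, is not enough; you need $m-b_{i-1}>1$. This holds because a $P$ step remains.
\item Your remark that the host root must form before the final parasite event is false when $\dhh<n$, since the inequality allows trailing $H$'s, as in $S_{int}$. Nothing depends on it: the letter counts alone guarantee that each $H$ step finds two host lineages and that both trees end with a single root.
\end{itemize}
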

\begin{proof}
Suppose that $S=P^{\dpp}(HP)^{\dhh-1}H^{n-\dhh}$, which is the minimal sequence satisfying \cref{eq:nestingcond}. Then there is a nested tree with this nesting sequence, in which all parasites coalesce first until there are isolated parasite lineages in $\dhh$ host lineages. A host merge among these host lineages enables a further parasite merge et cetera, until there is a single remaining parasite lineage. Above this the empty host lineages merge.
Any scenario with more P's to the right is a possible nesting sequence for a nested tree with the same host and parasite tree topologies by changing the rank of parasite and host nodes. Since the first $P^{\dpp}$ exhaust additional parasite lineages, there is no nested tree with more $P's$ appearing to the left in the nesting sequence.
    \end{proof}

\begin{definition}
    The nesting sequence $S_{int}=P^{\dpp}(HP)^{\dhh-1}H^{n-\dhh}$ is called \textit{interleaved} and the nesting sequence $S_{dec}=H^{n-1}P^{m-1}$ is called \textit{decoupled}.
\end{definition}
See \Cref{fig:treesinS3} for an example. Note that if $\ell$ is bijective then the interleaved nesting sequence is $(HP)^{n-1}$.

\begin{definition}
    We define a poset $(\mathcal{NS}, \leq)$ of admissible nesting sequences for a fixed type $(n,m,\ell)$ as follows.
    We say $S\leq  S'$  
if there is some $i$ such that $S_i = P, S_{i+1}= H, $ and $S'$ is identical with these entries reversed: 
\[S' = \left\lbrace \begin{array}{ll}
    S_j & j\notin \{i,i+1\} \\
     H & j = i\\
     P & j=i+1.
\end{array}\right.
\]
We extend to the poset by taking the reflexive transitive closure.
\end{definition}
Note that passing from $S$ to $S'$ in the definition corresponds to a rank interchange passing a parasite speciation above the next host speciation. By Lemma \ref{lem:nestingseq}, if $S$ is admissible, then $S'$ is.
\begin{lemma}\label{lem:upset}
Let $nrt(T)$ be a nested ranked topology with individual ranked topologies $rt_H$ and $rt_P$, leaf map $\ell$
and nesting sequence $S(T).$ Let $S'$ be any $S'\geq S(T) $ in the nesting sequence poset, then there is a compatible nested ranked topology $(rt_H, rt_P, \ell, S')$.
\end{lemma}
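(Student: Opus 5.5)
By the definition of the poset, every $S'\geq S(T)$ is reached from $S(T)$ by a finite chain of covering moves, each swapping an adjacent pair $PH$ into $HP$. So I would argue by induction on the length of such a chain. The whole content is then the single-step statement: if $(rt_H,rt_P,\ell,S)$ is a compatible nested ranked topology and $S'$ is obtained from $S$ by swapping $S_i=P$, $S_{i+1}=H$, then $(rt_H,rt_P,\ell,S')$ is also compatible. Here "compatible" means it is realised by some fully resolved nested tree, i.e.\ there are times satisfying the host--parasite compatibility condition \eqref{eqn:metricmatrix}.

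For the single step, I would pick a realising nested tree $(T_H,T_P,\ell)$ with nesting sequence $S$. Let $t^{s_i}=t^P_k$ be the parasite speciation time and $t^{s_{i+1}}=t^H_j$ the host speciation time immediately above it. I would modify only $t^P_k$, moving it to a value in the open interval $(t^H_j,t^{s_{i+2}})$, or to any value above $t^H_j$ if $i+1$ is the last index. All other node times stay fixed. Since no other node time lies strictly between $t^{s_i}$ and $t^{s_{i+2}}$, the parasite ranked topology $rt_P$ and the host ranked topology $rt_H$ are unchanged. The new combined ordering realises exactly $S'$.

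It remains to check \eqref{eqn:metricmatrix}. Raising a parasite node only increases the parasite distances $d^P(a,b)$ for pairs whose MRCA is that node, and leaves all other $d^P$ values unchanged. The host distances $d^H$ do not change at all. Since the inequality $d^P(a,b)\geq d^H(\ell(a),\ell(b))$ held before and its left side only grew, it still holds. The one thing to confirm is that raising $t^P_k$ does not break the parasite ultrametric structure, i.e.\ the node stays below its parent and above its children in $T_P$. This holds because the parent of the node in $T_P$ has a larger time, which is $\geq t^{s_{i+2}}$ (it is a parasite node above position $i$ and distinct from $t^{s_{i+1}}$, which is a host node). If the node is the parasite root, then the last symbol of $S$ would be $P$ at position $i<i+1$, so $P$ is not last. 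In that case I would instead argue by admissibility via Lemma~\ref{lem:nestingseq}, or simply note that the root height may be increased freely.

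The main obstacle I expect is this root and endpoint bookkeeping. The paper's convention sets the parasite root time $t^P_{m-1}=h$ as the top, while the lemma's statement allows $P$ at position $i$ to be any parasite node. If the moved node is the parasite root, raising it above a host node is still harmless, because the compatibility inequalities only get looser. The only care needed is to reinterpret the height $h$ accordingly.
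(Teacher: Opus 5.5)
Your argument is correct, and it takes a different route from the paper. The paper proves the lemma in one line by checking the prefix-count inequality \eqref{eq:nestingcond} for $S'$. Moving a $P$ to the right can only decrease the number of $P$'s and increase the number of $H$'s in each prefix, so $S'$ satisfies the criterion of Lemma~\ref{lem:nestingseq}. That is quick and purely combinatorial. On its own, however, it only shows that $S'$ is admissible for the type $(n,m,\ell)$, i.e.\ realised by \emph{some} nested tree. The fact that the \emph{given} pair $rt_H, rt_P$ survives is left to the remark in the proof of Lemma~\ref{lem:nestingseq} that ranks can be changed while keeping the topologies, and that remark is essentially the present lemma. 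Your construction proves exactly this topology-specific statement. You fix a realisation and push the single parasite node past the host node directly above it. You then observe that neither ranked topology changes, and that \eqref{eqn:metricmatrix} can only become looser, since $d^P$ can only grow while $d^H$ is fixed. Your argument is self-contained, does not need Lemma~\ref{lem:nestingseq}, and makes clear why upward moves in the poset are always possible while downward ones are not. Two small cleanups. In the root case, all you need to say is that the parasite root has no parent, so it can be raised without constraint. The fallback to Lemma~\ref{lem:nestingseq} that you mention would not suffice, for the same type-versus-topology reason. The sentence about the last symbol of $S$ is muddled and can be dropped.
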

\begin{proof}
Follows from \Cref{lem:nestingseq}:
\[|\{S'_j=P\}_{j\leq i}|\leq |\{S_j=P\}_{j\leq i}|\leq
|\{S_j=H\}_{j\leq i}|+\dpp \leq |\{S'_j=H\}_{j\leq i}|+\dpp.\]
\end{proof}
Hence trees can freely move in the direction of decoupling (``up-sets" of $S$) via the rank interchanges encoded by the relation $\leq$ of the nesting poset.  
The interleaved and decoupled nesting sequences are the unique minimal resp. maximal element of $(\mathcal{NS}, \leq)$. 

If the nested tree is not fully resolved, then the inequalities in the sequence of $t^{s_i}$'s are no longer strict, and the nesting sequence as above is not unambiguously defined.

There is a refinement of the nesting sequence of a fully resolved nested tree that captures additional biologically relevant information, namely whether the parasite events can be thought of as a cospeciation or a duplication.
\begin{definition}
    Let $\{x_p|x_p'\},\{x_h|x_h'\}$ be splits corresponding to speciation events in the parasite and host tree, respectively. Then we say that the events are {\em coupled} if $\ell(x_p)\subseteq x_h,\ell(x_p')\subseteq x_h'$ (or $\ell(x_p)\subseteq x_h'$ and $\ell(x_p')\subseteq x_h$). 
\end{definition}
\begin{definition}
    For a fully resolved nested tree with $t-$coordinates $\{t^H_i\}$ and $\{t^P_j\}$, the
    \emph{annotated nesting sequence} $S \in \{P^c, P^d,H\}^{N-2}$ is the sequence such that
    \begin{itemize}
        \item  $S_i = H$ if $t^{s_i}$ is the time of a speciation event in $T_H$.
        \item $S_i = P^c$ if $t^{s_i}$ is the time of a speciation event in $T_P$ that is coupled with the host speciation at $S_{i-1}=H$.
         In this case $t^{s_i}\rightarrow t^{s_{i-1}}$ corresponds to a \textit{cospeciation event} of parasite and host.
        \item $S_i = P^d$ if $S_{i-1}\neq H$ or if the lineages spawning at $t^{s_i}$ are not split at $t^{s_{i-1}}.$ In this case $t^{s_i}$ corresponds to a \textit{duplication event} of the parasite inside the host.
    \end{itemize}   
\end{definition}
Note that in this construction we have defined {\em cospeciation} as both a coupling of speciation events and adjacency in the combined ranking. 

\begin{definition}
    Let $(T_H,T_P,\ell)$ be a nested tree that is not necessarily fully resolved.
     Let $\{t_k\}_{k=1,\dots,K}$ be the union of timesteps in $T_H$ and $T_P$.
 At time $t_k$, suppose $r$ host and $s$ parasite lineages speciate, with in-degree $d^H_1,\dots, d^H_r$ and $d^P_1,\dots, d^P_s$, resp. Let $r_{tot}= \sum_{i=1}^r d^H_i$. Define $C$ to be the set of parasite speciation events at $t_k$ that are coupled to some host speciation also occurring at $t_k.$ Let $$s_{c} := \sum_{i\in C} d^P_i \hspace{1cm} s_d := \sum_{i\notin C} d^P_i.$$
  We construct a substring $\mathcal{S}_k$ of the nesting sequence: 
    \[\mathcal{S}_k:=H^{r_{tot}}(P^c)^{s_c}(P^d)^{s_d}.\]    
 Finally, we define the canonical admissible nesting sequence of $(T_H,T_P,\ell)$ to be the concatenation \[S = \mathcal{S}_1\dots\mathcal{S}_k\dots \mathcal{S}_K\]
    
\end{definition}
From this, we can define nested ranked topologies $(rt_H,rt_P,\ell,S)$ for ranked topologies and orderings that are not fully resolved.

\subsection{Nested tree topology}

\begin{definition}
        A \textit{nested ranked tree topology} is a tuple \[(rt_H, rt_P, \ell, S)\] of a host ranked tree topology $rt_H$, a parasite ranked tree topology $rt_P$, a leaf map $\ell: L_P \rightarrow L_H$ and a compatible nesting sequence $S$.
        \end{definition}

A nested tree homeomorphism is defined as a map $\phi:(T_H,T_P,\ell)\to (T_{H'},T_{P'},\ell')$ such that $\phi|_{T_H}, \phi|_{T_P}$ are label-preserving homeomorphisms onto their image, $\phi$ preserves the total rank of vertices in $T_H\cup T_P$, and $\phi$ commutes with parasite maps: \[\phi|_{T_H}\circ\hat{\ell}(T_P)=\hat{\ell'}\circ \phi_{T_P}(T_P).\] Then we have that a nested ranked tree topology $nrt$ is an equivalence class of nested phylogenetic trees under nested tree homeomorphism.
          
Not all tuples $(rt_H, rt_P, \ell, S)$ are admissible. However, for every tuple $(rt_H, rt_P, \ell)$, there exists at least one compatible nesting sequence, namely the decoupled nesting sequence. Depending on the compatibility of $(rt_H, rt_P, \ell)$, more interleaved nesting sequences (that are lower down in the poset $(\mathcal{NS}, \leq)$) may be admissible.

\section{$\sigma$-space of nested trees}\label{sec:nestedspace}
We will construct a metric space $\Sigma_{n,m,\ell}$ of nested phylogenetic trees using the descriptions of nested trees from \Cref{sec:nestedtrees} to assemble a cubical complex which inherits a global metric from its piecewise construction.

\subsection{The construction of $\sigma$-space}

In analogy with $\tau$-space of \cite{ultra}, we define a parametrisation of nested trees by considering their ranked topology and the intervals between speciation times. 

Like individual phylogenetic trees, a nested tree is fully determined by its $nrt$ and an increasing time sequence $\{t^{s_i}\}_{s_i\in S}$. For $T$ a nested tree, not necessarily resolved, we define its $\sigma$-coordinates
\[\sigma^{S_i}_i = t^{s_{i}}- t^{s_{i-1}}\] where we set $t^0=0$ (see \cref{fig:orthantmoves}). The label $S_i\in \{P,H\}$ from the nesting sequence represents whether the node at the top of the interval $\sigma_i$ is a speciation event in $T_P$ or $T_H$. This label is unambiguous precisely when $\sigma_i$ is nonzero.
\begin{figure}[h]
    \centering 
    \begin{subfigure}[t]{0.45\textwidth}
        \centering
	    \begin{tikzpicture}[scale=1]
        \node at (-3,0)  {\includegraphics[width=0.95\textwidth]{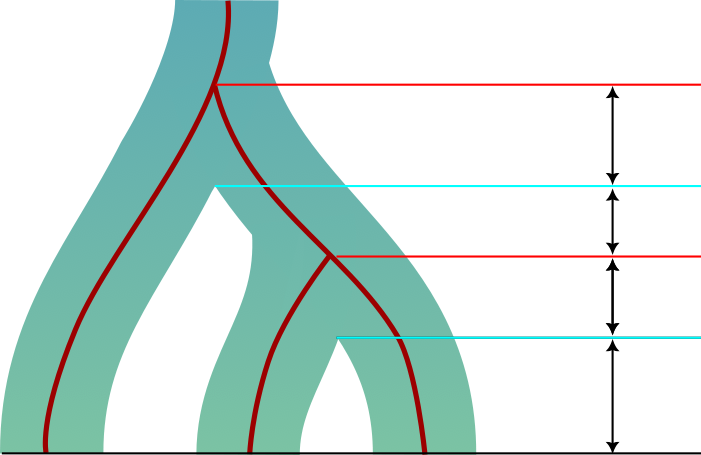}};
        \node at (-0.2,-1.5) {$\sigma_1^H$};
        \node at (-0.2,-0.62) {$\sigma_2^{P^c}$};
        \node at (-0.2,0.1) {$\sigma_3^H$};
        \node at (-0.2,0.8) {$\sigma_4^{P^c}$};
\end{tikzpicture}
        \caption{Interleaved tree in $\Sigma_3$ with nesting sequence $HPHP$.}
        \label{fig:HPHP}
    \end{subfigure}%
    \hspace{1cm}
    \begin{subfigure}[t]{0.45\textwidth}
        \centering
        	    \begin{tikzpicture}[scale=1]
        \node at (-2.5,0)  {\includegraphics[width=0.95\textwidth]{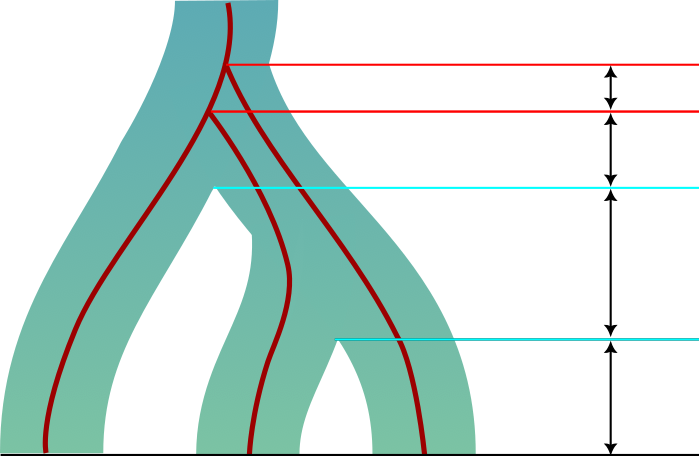}};
        \node at (0.3,-1.5) {$\sigma_1^H$};
        \node at (0.3,-0.3) {$\sigma_2^H$};
        \node at (0.3,0.7) {$\sigma_3^{P^c}$};
        \node at (0.3,1.3) {$\sigma_4^{P^d}$};
\end{tikzpicture}
        \caption{Decoupled tree in $\Sigma_3$ with nesting sequence $HHPP$.}
        \label{fig:HHPP}
    \end{subfigure}
    \caption{Interleaved (A) and decoupled (B) nested trees in $\Sigma_3$, together with their $\sigma$-coordinates.}
    \label{fig:treesinS3}
\end{figure}

For a nested tree $T$ of type $(n,m,\ell)$, we parametrise $T$ by the pair $(nrt(T), \overline{\sigma})$ where $nrt(T)$ is its nested ranked topology and $\overline{\sigma}\in \Rp^{n+m-2}$ is its vector of $\sigma$-coordinates.
Say that there exist $M$ nested ranked topologies of type $(n,m,\ell)$\footnote{In future work we will approach the problem of counting the number of nested ranked topologies for a given nesting sequence. The problem of counting nested (ranked) tree topologies, or coalescent histories, has also been approached in \cite{Rosenberg2007, DegnanRosenbergStadler2012, DisantoRosenberg2015}.}.

 Let $N=n+m$. Then fully resolved nested trees correspond to the interiors of $M$ orthants, each isometric to $\Rp^{N-2}$. The set of orthants is partitioned by the possible nesting sequences of length $N-2.$
At a codimension $k$ face of the orthants, $k$ of the $\sigma$ coordinates go to 0. Each face corresponds to a nested tree topology which is not fully resolved, parametrised by the non-zero $\sigma$-coordinates.

The same nested ranked topology can arise as a boundary face of multiple different orthants, as trees which are fully resolved may degenerate to the same tree in multiple different ways. As a distinguished example, we highlight the {\em cone point}, the nested star tree with $\underline{\sigma}=\bf{0}$. The cone point is the unique dimension-0 face occuring in every orthant at the origin ${\bf 0}\in \mathbb{R}^{N-2}_{\geq 0}$. 

By identifying faces which represent the same trees (the indexing of $\underline{\sigma}$ may differ by a bijective correspondence but the $nrt$ is the same), we obtain a (connected) cube complex with the quotient topology, called $\Sigma_{n,m,\ell}$

We use this piecewise structure to endow $\Sigma_{n,m,\ell}$ with a metric. If two nested trees $T_0$, $T_1$ have the same nested tree topology then their distance is the difference of parametrisation vectors
\[g(T_0, T_1) = |\underline{\sigma}_0 - \underline{\sigma}_1|. \]
If they have different topology then $g(T_0, T_1)$ is the path length infimum.
\[g(T_0,T_1) = \inf_{\substack{\alpha:[0,1]\to \Sigma \\ \alpha(0) = T_0, \alpha(1) = T_1}}\limsup_{\substack{0=x_0\leq x_1\leq \dots \leq
x_k = 1\\ k\to \infty}} \sum_{i=0}^{k-1} g(\alpha(x_i),\alpha(x_{i+1}))\]

where $g(\alpha(x_i),\alpha(x_{i+1})) := 0$ if they are in two different cubical components.

\begin{lemma}
    $\Sigma_{n,m,\ell}$ is path-connected.
\end{lemma}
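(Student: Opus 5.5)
The plan is to exploit the cone point. By construction, every orthant of $\Sigma_{n,m,\ell}$ is a copy of $\Rp^{N-2}$, and its origin $\mathbf{0}$ is identified with the nested star tree, the unique dimension-$0$ face that occurs in every orthant. So it suffices to join an arbitrary nested tree $T$ to the cone point by a path lying inside a single (closed) orthant. Concatenating two such paths, one from $T_0$ to the cone point and one from the cone point to $T_1$, then gives a path between any two points.

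\textbf{Step 1.} Every point lies in some closed orthant. A non-resolved nested tree $T$ has a canonical admissible nesting sequence, and hence a nested ranked topology $(rt_H,rt_P,\ell,S)$. We resolve each multifurcating node of $T_H$ and $T_P$ arbitrarily into a binary cascade, which is possible because ranked binary refinements always exist. We also order simultaneous events as in $\mathcal{S}_k$, namely host events before parasite events. The resulting sequence is admissible by \cref{lem:nestingseq}, since compatibility of $T$ gives the inequality \eqref{eq:nestingcond} at every prefix. Then $T$ is the face of the corresponding fully resolved orthant obtained by setting the inserted $\sigma$-coordinates to $0$. If this refinement argument causes trouble for non-resolved trees with ties, we can fall back on the definition of $\Sigma_{n,m,\ell}$ as the quotient of the $M$ orthants: every point is by definition the image of some $\overline{\sigma}\in\Rp^{N-2}$ in some orthant, so Step 1 holds tautologically.

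\textbf{Step 2.} The straight-line path. Within that orthant, take
\[
\alpha(s)=(nrt,(1-s)\overline{\sigma}), \qquad s\in[0,1].
\]
This is continuous in $\Rp^{N-2}$ and hence continuous in the quotient topology. It ends at $\mathbf{0}$, which is the cone point. Biologically, $\alpha$ uniformly rescales all speciation times to zero, and such a rescaling preserves the compatibility condition \eqref{eqn:metricmatrix}. For $s<1$ the path stays within faces of the same orthant, so it never leaves the complex. The path has finite length $|\overline{\sigma}|$, so it is also a path of finite length for $g$.

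\textbf{Step 3.} Concatenate the paths for $T_0$ and $T_1$ through the cone point.

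The only potential obstacle is Step 1. One must check that every tree really is parametrised, that is, that the quotient space consists exactly of the images of the orthants, and that nonempty orthants exist at all. The latter holds because the decoupled sequence is always admissible for any $(rt_H,rt_P,\ell)$, so $M\geq 1$.
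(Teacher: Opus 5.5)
Your proof is correct and is essentially the paper's argument: each closed orthant is path-connected (your straight-line path $(1-s)\overline{\sigma}$) and contains the cone point, so any two trees are joined through it. Your Step 1 makes explicit what the paper takes for granted, namely that every tree lies in some closed orthant. The right justification there is that placing tied host events just below tied parasite events preserves \eqref{eqn:metricmatrix}, which yields a compatible resolved tree; the type-level admissibility of \cref{lem:nestingseq} does not by itself give a compatible nested topology for your chosen refinements of $rt_H$ and $rt_P$.
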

\begin{proof}
    The closure of every orthant is individually path-connected, and each contains  the cone point in its boundary.
\end{proof}
\begin{definition}
    \label{def:sigmaspace} $\Sigma_{n,m,\ell}$ is the length space consisting of the set of all nested trees of type $(n,m,\ell)$, i.e. with $n$ hosts, $m$ parasites, and a fixed map $\ell:[m]\to[n]$, together with the topology induced by the metric $g.$
\end{definition}

 If there is only one possible set map $\ell,$ for example if $n=1,$ then we write $\Sigma_{n,m}$. Otherwise, $\Sigma_{n,m}$ will denote an implied injective parasite map $\ell.$ Without loss of generality, we assume $\ell:[m]\hookrightarrow [n]$ is the identity map.

   If $m=n$ and $\ell$ is bijective then we write $\Sigma_n$.

As in the case of $\tau$-space, the coordinate $\sigma_1$ plays a less important role, and we can define a subspace $\Sigma_{n,m,\ell}^0$ of $\Sigma_{n,m,\ell}$ where trees are labelled by $rt(T)$ and the vector $(\sigma_2, \dots, \sigma_{n+m-2}) \in \Rp^{n+m-3}$. As before we have that $\Sigma_{n,m,\ell} = \Sigma_{n,m,\ell}^0 \times \Rp$.

At codimension 1 faces of orthants of $\Sigma_{n,m,\ell}$, analogous to in $\tau$-space, 1, 2 or 3 orthants can meet. 
$\tau$-space differs slightly in that $\sigma_1=0$ is not the only external codimension 1 face. 

Whenever a parasite speciation event at $t_i$ is followed directly by a coupled host speciation event at $t_{i+1}$ in the same lineage, then the face where $\sigma_{i+1} \rightarrow 0$ is an external face of the cube complex: 
\begin{definition}
A \emph{cospeciation boundary} is a codimension 1 facet of an orthant in $\Sigma_{n,m,\ell}$ corresponding to a cospeciation event, i.e. $\sigma_i^{P^c}=0.$
\end{definition}

\begin{definition}
The \emph{cospeciation boundary} of a ($k$-dimensional) orthant in $\Sigma_{n,m,\ell}$ is the subset of $\operatorname{cl}(\mathcal{O})$ corresponding to at least one simultaneous cospeciation event i.e. $\sigma_i^{P^c}=0$ for some $i.$
\end{definition}

\begin{lemma}
    A cospeciation boundary is external, in the sense that it is not shared by multiple orthants.
\end{lemma}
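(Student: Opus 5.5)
To prove the lemma, I will take a point $T$ in the relative interior of a cospeciation boundary facet of an orthant $\mathcal{O}$ with nested ranked topology $(rt_H,rt_P,\ell,S)$. On this facet $S_{i-1}=H$, $S_i=P^c$ and $\sigma_i=0$, with all other $\sigma_j>0$. I will then classify every fully resolved nested ranked topology whose orthant contains $T$ in its closure. Any such orthant $\mathcal{O}'$ must arise by resolving the unique degenerate time $t^{s_{i-1}}=t^{s_i}$ of $T$, since all other nodes of $T$ are binary and at distinct times. So it suffices to list the resolutions of that single time level and show that exactly one of them is admissible.

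At the degenerate time of $T$, exactly one binary host split $\{x_h|x_h'\}$ and one binary parasite split $\{x_p|x_p'\}$ occur at the same instant, and they are coupled: $\ell(x_p)\subseteq x_h$ and $\ell(x_p')\subseteq x_h'$. Since both nodes are binary, no NNI-type resolution is available. The only candidate resolutions are the two rank orders: $H$ before $P$, which is $\mathcal{O}$ itself, and $P$ before $H$. The key step is to show that the second order is not a nested tree. If the parasite node were strictly below the host node, then just below the parasite node its two lineages would both lie in a single host lineage, namely the edge just below the host split, and $\hat\ell$ would send them to the same host edge. Yet their descendants map into $x_h$ and $x_h'$, which have already separated below that time. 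Equivalently, for $i\in x_p$ and $j\in x_p'$ we would get $d^P(i,j)=2t^{P}<2t^{H}=d^H(\ell(i),\ell(j))$, contradicting the compatibility condition \eqref{eqn:metricmatrix}. So the swapped order is inadmissible.

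I also need to rule out two other possibilities. The first is a resolution of $T$ that changes $rt_H$ or $rt_P$. This is impossible, because collapsing a time interval does not alter the individual tree topologies, and $T$ determines them since no node of $T$ has higher in-degree. The second is that the face might be identified with a face of an orthant having a different nesting sequence through the reindexing used in the gluing. This is excluded because the $nrt$ of the face records the canonical sequence $\mathcal{S}_k=H(P^c)$ at that level, which fixes the gluing. Hence $\mathcal{O}$ is the unique orthant containing the facet, and the boundary is external.

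I expect the main obstacle to be making rigorous that "resolving a degenerate time" produces only rank reorderings here. This requires an explicit description of the face identifications in $\Sigma_{n,m,\ell}$. I would handle it by noting that any orthant whose closure contains $T$ has a nested ranked topology that degenerates to $nrt(T)$ by merging consecutive ranks, so it must agree with $nrt(T)$ away from the degenerate level.
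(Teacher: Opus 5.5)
Your proposal is correct and follows the same route as the paper. The paper also argues that any other orthant could differ from $o$ only in how the collapsed interval $\sigma_i$ is resolved, and simply asserts that no such nested topology exists. Your explicit checks fill in what that one-line proof leaves implicit: the swapped order would give $d^P(i,j)=2t^P<2t^H=d^H(\ell(i),\ell(j))$ for $i\in x_p$, $j\in x_p'$, contradicting \eqref{eqn:metricmatrix}, and no NNI-type resolution arises because the two merged nodes are binary and belong to different trees.
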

\begin{proof}
At the cospeciation boundary $\partial o$ of an orthant $o$, the nodes $i$ and $i-1$ have the same rank and correspond to a cospeciating parasite/host pair.
There is no nested tree topology that agrees with the tree topology of $o$ outside of the interval between nodes $i$ and $i-1$ while resolving $\sigma_i$ differently.
\end{proof}

A cospeciation boundary corresponds biologically to a cospeciation event.
As shown in Lemma \ref{lem:nestingseq}, the interleaved nesting sequence of $\Sigma_{n,m,\ell}$ is 
\[S_{int}=P^{\dpp}(HP)^{\dhh-1}H^{n-\dhh},\]
hence the maximum number of cospeciation events is $\dhh-1$. 

\begin{definition}
Let $\ell$ be surjective. For a nested tree $(T_H,T_P,\ell),$ consider the $n\times n$ pushforward matrix $\ell_*(d^P)_{kl} = \min_{\ell(i)=k, \ell(j) = l} d^P(i,j)$, and define two matrices $$M_{ij} = d^P(i,j) - \ell_*(d^P)_{\ell(i),\ell(j)}$$ $$ N = \ell_*(d^P)-d_H.$$ If $M = \mathbf{0}$, we say that $(T_H,T_P,\ell)$ is {\em concordant}; 
that is, the restriction of $T_P$ to $\ell^{-1}(h)$ for any host leaf $h$ is a star tree.
If $N = \mathbf{0},$ we say that $(T_H,T_P,\ell) $ is {\em maximally coupled}; meaning that all host speciation events happen simultaneously (cospeciate) with the earliest coupled parasite speciation (if there is one). \end{definition}

\begin{lemma}
Suppose $(T_H,T_P,\ell)$ with $\ell$ surjective and $T_H$ having distinct speciation times. Suppose it is maximally coupled, with nesting sequence $S$. Then $S'\lneq S $ is not admissible for $(rt(T_H),rt(T_P),\ell)$.   
\end{lemma}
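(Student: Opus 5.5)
The plan is to reduce the statement to a single elementary swap and then use the compatibility inequality \cref{eqn:metricmatrix} to show that the swap forces a parasite split to occur strictly before the host split it must follow.

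\emph{Reduction.} Since $\leq$ on $(\mathcal{NS},\leq)$ is the reflexive transitive closure of the elementary move $PH\mapsto HP$, any $S'\lneq S$ is obtained from $S$ by a nonempty chain of reverse moves $HP\mapsto PH$, each pushing a $P$ one step to the left. The contrapositive of \Cref{lem:upset} says: if some $S''$ with $S'\leq S''\lneq S$ is inadmissible for $(rt_H,rt_P,\ell)$, then $S'$ is inadmissible too, because an admissible $S'$ would make its whole up-set admissible. So it suffices to treat the case where $S'$ arises from $S$ by one swap $S_i S_{i+1}=HP\mapsto PH$. In this swap a parasite node $p$ (the $j$-th parasite node in the rank of $rt_P$) is moved below a host node $h$ (the $k$-th host node in the rank of $rt_H$).

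\emph{Key step: maximal coupling pins each $P$ to the $H$ immediately before it.} Fix $rt_H$ and $rt_P$. The parasite ranks are totally ordered, so $p$ must lie above every parasite node of lower rank. For a host node $h'$ with split $\{x|x'\}$, compatibility gives $d^P(i,i')\geq d_H(\ell(i),\ell(i'))$ for all $i\in\ell^{-1}(x)$ and $i'\in\ell^{-1}(x')$. Hence the earliest parasite node separating $\ell^{-1}(x)$ from $\ell^{-1}(x')$ lies at or above $h'$ in every realisation. Now use $N=\mathbf 0$: this says $\ell_*(d^P)=d_H$, so that earliest parasite node sits at exactly the time of $h'$. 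In the canonical sequence it therefore appears as $HP^c$, with $P^c$ directly after $h'$. Surjectivity of $\ell$ ensures that every host split has such a coupled parasite node, and distinct host times keep these $HP^c$ blocks separate.

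\emph{Conclusion in the coupled case.} If the moved $P$ is the $P^c$ coupled to $h$, then $S'$ places it strictly below $h$. This contradicts the displayed compatibility inequality in every realisation with ranked topologies $(rt_H,rt_P)$, so $S'$ is not admissible. If instead $p$ is coupled to some other host node, or $p$ lies above a parasite node coupled to $h$, the same contradiction arises via the parasite rank order. In either case I would invoke the prefix count of \Cref{lem:nestingseq}, refined from total counts of $P$'s to counts of $P$'s whose coupled host node has already occurred.

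\emph{Main obstacle.} The difficulty is the case where the $P$ being moved is a duplication $P^d$ that is not forced by any coupled $P^c$ below it, for example a duplication inside a host lineage not created by $h$. Here I would first try to show that maximal coupling together with the canonical-sequence convention excludes such a configuration. The idea is that a duplication preceded directly by an unrelated $H$ still lies in an $HP$ block only if its descendant parasites are separated by that host split, which would make it coupled. If that argument fails, I expect the lemma needs the additional hypothesis of concordance ($M=\mathbf 0$), which eliminates within-host duplications entirely. In that case every $P$ is a $P^c$, and the coupled argument above finishes the proof.
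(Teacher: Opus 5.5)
Your ingredients are the right ones, and your explicit reduction to a single transposition via Lemma~\ref{lem:upset} is a step the paper's one-line proof leaves implicit. But the proposal does not close. You leave the ``$P^d$ case'' open and suggest the lemma may need concordance. The missing observation is that your own key step already makes that case vacuous.

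After the reduction, $S'$ is obtained from $S$ by swapping $S_iS_{i+1}=HP$. So the moved entry is by definition the $P$ \emph{immediately following} an $H$ in the canonical sequence $S$. At a host time $t_h$ the canonical block is $H(P^c)^{s_c}(P^d)^{s_d}$, and surjectivity of $\ell$ together with $N=\mathbf 0$ gives $s_c\geq 1$. A block at a time with no host event consists only of $P^d$'s, and it is preceded either by nothing or by a block ending in $P$. Hence no $P^d$ is ever directly preceded by an $H$. The moved entry is therefore always a parasite node $p$ at time $t_h$ coupled to $h$, with descendants $i,j$ whose hosts are separated by $h$. Pushing $p$ strictly below $h$ gives $d^P(i,j)<d^H(\ell(i),\ell(j))$, a contradiction. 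That is exactly the paper's argument. It also shows that your subcases ``$p$ coupled to some other host node'' and ``$p$ above a parasite node coupled to $h$'' never arise, so the vague refined prefix count is unnecessary.

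The fallback hypothesis is also unneeded, and adopting it would prove a weaker statement. Take $\Sigma_{2,3,(12)}$ with parasite top split $1|23$. The maximally coupled tree has $S=HP^cP^d$, the root is an uncoupled duplication, and $M_{13}=d^P(1,3)-d^P(2,3)>0$. Yet the only lower sequence $PHP$ is inadmissible, because the $2|3$ split must lie above $A|B$.

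There are two smaller points.
\begin{enumerate}
\item You should check that the separating node at time $t_h$ is coupled in the paper's set-inclusion sense, since this is what places it first in its block. The check is short: if a descendant on the $i$-side mapped into $x'$, it would be at parasite distance $<2t_h$ from $i$ but at host distance $2t_h$, violating compatibility.
\item Matching the moved $P$ with a specific parasite node requires that ties among simultaneous parasite nodes in $rt(T_P)$ are kept, not re-resolved. If re-resolving were allowed, a duplication simultaneous with $h$ in an unrelated host lineage could be moved below $h$. For example, take host $((A,B),C)$, parasites $1\in A$, $2\in B$, $3,4\in C$, with the $3|4$ split at $t_{h_1}$. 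Then $PHPHP\lneq HPPHP$ becomes realisable. That is the only genuine form of the obstacle you describe, and the paper's proof glosses over it as well.
\end{enumerate}
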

\begin{proof}
Since $\ell$ is surjective, every host speciation has at least one coupled parasite speciation. As $(T_H,T_P,\ell)$ is maximally coupled, this means that every $H$ in the nesting sequence is followed by at least one cospeciating $P^c$, hence the nesting sequence after transposing $H$ and $P$, $\hat{S} = \dots S_i \dots (PH)\dots$, is no longer host-parasite compatible along $\ell$.
\end{proof}

In this case, $(T_H,T_P,\ell)$ is contained in the maximum number of cospeciation boundary components for its topology.

For $\ell$ bijective,
if $M=\bf{0}$ and $N=\bf{0},$ then $\hat{\ell}$ is an isomorphism. This corresponds with the following definition in the bijective case. 
\begin{definition}
    The \emph{domain of perfect cospeciation} is the union of codimension $\dhh-1$ faces in $\Sigma_{n,m,\ell}$ that are intersections of cospeciation boundaries.
\end{definition}

The domain of perfect cospeciation is a collection of faces containing the origin, so it is contractible. In the bijective case it moreover has a familiar topology.

\begin{lemma}
    If a nested tree topology $(rt_H, rt_P, \ell, S)$ is such that $\ell$ is bijective and $S$ is the interleaved nesting sequence $(HP)^{n-1}$, then the ranked tree topologies of host and parasite are \textnormal{matching}: $rt_H = rt_P$ in $RT_n$.
    \end{lemma}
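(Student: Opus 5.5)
I would prove this by induction along the interleaved sequence, reading the nested tree upward in time from the leaves. Since $\ell$ is bijective, identify $L_P$ with $L_H=[n]$ via $\ell$, so that $\dpp=0$ and $\dhh=n$. The admissibility inequality \cref{eq:nestingcond} then reads $|\{S_j=P\}_{j\le i}|\le|\{S_j=H\}_{j\le i}|$. The key structural fact I want is the following invariant. Just after the $k$-th pair $HP$ (i.e.\ at time $t^{s_{2k}}$), the partition of $[n]$ into descendant sets of current host lineages coincides with the partition into descendant sets of current parasite lineages, where parasite descendants are transported by $\ell$.

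The base case $k=0$ is immediate: at the leaves both partitions are into singletons, by bijectivity. For the inductive step, suppose the partitions agree, say both equal $\pi$, after step $2k$. The next event is a host merge $\{A|B\}$ with $A,B\in\pi$, producing host partition $\pi'$ in which $A\cup B$ is a block. The following event is a parasite merge $\{C|D\}$ with $C,D\in\pi$.

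I would show that $\{C|D\}=\{A|B\}$ using the realisation $\hat\ell$. Each parasite lineage with descendant set $C$ sits, just below its merge time, inside a host lineage whose descendant set contains $\ell(C)=C$. Before time $t^{s_{2k+1}}$, host and parasite blocks agree, so parasite lineage $C$ lies in host lineage $C$; the same holds for $D$. Host-parasite compatibility requires $d^P(i,j)\ge d^H(i,j)$ for $i\in C$, $j\in D$. Hence the host MRCA of $C$ and $D$ must be at or below the parasite merge time $t^{s_{2k+2}}$. The only host merge in the interval $(t^{s_{2k}},t^{s_{2k+2}}]$ is $\{A|B\}$, and all earlier host merges produced blocks of $\pi$, which keep $C$ and $D$ separate. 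So $C$ and $D$ must be joined by the host merge $A\cup B$, and since $C,D\in\pi$ we get $\{C,D\}=\{A,B\}$. This restores equality of partitions, now both equal to $\pi'$.

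Running the induction through all $n-1$ pairs shows that the $k$-th host split equals the $k$-th parasite split for every $k$. A ranked topology is exactly the sequence of its splits in rank order, so $rt_H=rt_P$ in $RT_n$.

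I expect the main obstacle to be making the compatibility step precise. The claim that a parasite merge of $C,D$ forces a host ancestor of $C\cup D$ at or below that time follows directly from the metric inequality $d^P\ge d^H\circ\ell$. The subtler point is that $C\cup D$ must be exactly the block $A\cup B$ rather than some larger union. This rests on the bookkeeping that between consecutive parasite events there is exactly one host event, which is where the interleaved form of $S$ is used. I would state this as a small separate claim before the induction.
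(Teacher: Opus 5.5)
Your proof is correct and takes the same approach as the paper. The paper argues in one line that after each host coalescence the only possible parasite merge is the topologically corresponding one. Your partition invariant, combined with the compatibility inequality $d^P\ge d^H\circ\ell$ and the observation that exactly one host event lies between consecutive parasite events, is a rigorous justification of that sentence, leading to the same conclusion that the splits agree in rank order.
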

\begin{proof}
    After any host coalescence, there is only one possible parasite merge, namely the one corresponding topologically to the host merge. Thus the two trees have the same ranked topology.
\end{proof}

\begin{lemma}
    If $\ell$ is bijective then the domain of perfect cospeciation embeds into $\Sigma_n$ as a copy of $\T_n$.
\end{lemma}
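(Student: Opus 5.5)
We construct an explicit map $\iota:\T_n\to\Sigma_n$ and check that it is an isometric embedding of cube complexes whose image is exactly the domain of perfect cospeciation. With $\ell$ bijective we have $\dhh=n$, $\dpp=0$, and the interleaved sequence is $(HP)^{n-1}$. So the domain of perfect cospeciation is the union of the codimension-$(n-1)$ faces cut out by setting all $n-1$ cospeciation coordinates $\sigma^{P^c}_{2k}$, $k=1,\dots,n-1$, to zero.

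\textbf{Step 1: which faces.} First identify the orthants that can carry $n-1$ simultaneous cospeciation boundaries. A $P^c$ entry must directly follow an $H$ with which it is coupled. The sequence has $n-1$ letters $H$ and $n-1$ letters $P$. Hence having $n-1$ entries $P^c$ forces the sequence $(HP^c)^{n-1}$, i.e.\ the interleaved sequence with every parasite event coupled.

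By the preceding lemma, such orthants have $rt_H=rt_P=:rt\in RT_n$. Conversely, for each $rt\in RT_n$ the tuple $(rt,rt,\ell,(HP)^{n-1})$ is admissible, since the parasite can cospeciate at each host event. Moreover, each $P$ is then coupled with the immediately preceding $H$, because the parasite split at rank $k$ is exactly the host split at rank $k$ (transported along $\ell$). So these orthants are in bijection with $RT_n$. In each one, the face $\{\sigma_{2k}=0 \ \forall k\}$ is a copy of $\Rp^{n-1}$ with coordinates $(\sigma_1,\sigma_3,\dots,\sigma_{2n-3})$. Here $t^H_k=t^P_k=\sigma_1+\sigma_3+\dots+\sigma_{2k-1}$, so $\sigma_{2k-1}=\tau_k$.

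\textbf{Step 2: the map.} Define $\iota(rt,\overline\tau)$ to be the nested tree $(T,T',\ell)$, where $T$ is the ultrametric tree $(rt,\overline\tau)$ and $T'$ is its copy relabelled through $\ell^{-1}$. In other words, $d^P=\ell^*d^H$, so compatibility holds with equality and $M=N=\mathbf 0$. On each orthant of $\T_n$ this map is the linear isometry onto the face described above. It is injective, because $T_H$ determines the image point.

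\textbf{Step 3: compatibility with gluing (the main point).} We must check that $\iota$ respects the face identifications in both spaces, so that it descends to a well-defined injective cellular map. Two faces of $\T_n$ are glued exactly when they represent the same degenerate ranked tree. Their images are then the same degenerate nested tree, since the nested tree is determined by $T_H$. Conversely, if two image points coincide in $\Sigma_n$, then their host trees coincide, and hence so do their preimages.

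The remaining issue is that a face of a perfect-cospeciation face might also be glued in $\Sigma_n$ to faces of orthants outside the domain. This does not affect injectivity, but it does bear on the metric. For the intrinsic (path) metric on the union, the identification is combinatorially identical to that of $\T_n$: the codimension-$k$ faces of the image correspond to the vanishing of $k$ of the $\tau_i$ and to the same partially resolved ranked topology.

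We expect this step to be the main obstacle: verifying that the identifications in $\Sigma_n$, restricted to the subcomplex, coincide with those of $\T_n$, and that no extra identifications occur. We would handle it by noting that any face of the subcomplex has every $\sigma^{P^c}=0$. Therefore its nested ranked topology is determined by the degenerate host ranked topology, so face identification in $\Sigma_n$ reduces to face identification in $\T_n$. This gives a cubical isomorphism of $\T_n$ onto the domain of perfect cospeciation. As a result, $\iota$ is an isometric embedding for the induced length metric, and its image is the full union of the relevant faces.
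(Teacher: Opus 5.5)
Your proposal is correct and follows the paper's route: only the orthants with annotated sequence $(HP^c)^{n-1}$ and matching host/parasite ranked topologies contribute, and on the face where all $\sigma^{P^c}$ vanish the remaining odd coordinates are exactly the $\tau$-coordinates of the common tree. Your Step 3 makes explicit what the paper's two-line proof leaves implicit, namely that face identifications in $\Sigma_n$ restricted to this subcomplex agree with those of $\T_n$ because such points are determined by $T_H$; keep the isometry claim, as you do at the end, to the intrinsic length metric on the subcomplex rather than the ambient metric $g$, which neither you nor the paper establishes.
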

\begin{proof}
The only orthants intersecting the domain of perfect cospeciation have annotated nesting sequence $(HP^c)^{n-1}$ and matching host/parasite ranked tree topology. They moreover have corresponding edge lengths, hence the parasite and host tree are the same metric tree in $\T_n$.
\end{proof}
In particular, the intersection of the domain of cospeciation with the link of $\Sigma_n$ is homeomorphic to the link of $\T_n$.

\subsection{Low-dimensional $\sigma$-spaces}
We now give a full description of some $\sigma$-spaces with few hosts and parasites. 
As a convention we denote $\ell$ in terms of its partition of $L_P$, leaving out singletons.

\begin{example}[$\Sigma_{2}$]
    For illustration we start discussing the most basic example of two hosts an two parasites with a bijective parasite mapping. In this case, there is a single possible resolved nested ranked topology with nesting sequence $HP$. The space consists of a single orthant spanned by $(\sigma_1^H, \sigma_2^P) \in \Rp^2$. The face $\sigma_1 = 0$ is the usual external boundary while $\sigma_2 = 0$ is the cospeciation boundary of the orthant as well as the the domain of perfect cospeciation of the tree space.
\end{example}

\begin{example}[$\Sigma_{2,2,(12)}$]
    If we have two hosts and two parasites, but both parasites map to the same host, then we have two resolved nested ranked topologies separated by a rank change of the host and parasite speciation events. The two orthants have nesting sequence $HP$ (interleaved) and $PH$ (decoupled) and are glued together along the face $\sigma_2 = 0$ corresponding to a rank exchange. There is no cospeciation boundary.
\end{example}

\begin{example}[{$\Sigma_{2,3,(12)}$}]\label{ex:twoinone}
Suppose now that we have two host and three parasite leaves, with two parasites mapping to a single host - suppose that parasites 1 and 2 infest host $A$ while 3 is living on $B$. See \cref{fig:twoinone}. The possible nesting sequences are $PHP$ (interleaved) with one possible nested ranked topology and $HPP$ (decoupled) with three possible ranked topologies. We will discard $\sigma_1$ and describe the two-dimensional cube complex $\Sigma^0_{2,3,(12)}$.
The three decoupled $HPP$ orthants are glued along their $\sigma^P_3 = 0$ boundaries corresponding to the NNI in the parasite tree. Along the $\sigma^P_2 = 0$ boundary, the parasite tree with top split $1|23$ has a cospeciation boundary of split $2|3$ with host split $A|B$. The parasite tree with top split $2|13$ has a cospeciation boundary of split $1|3$ with host split $A|B$. However the parasite tree with top split $12|3$ has a rank change at $\sigma^P_2 = 0$ where it is glued to the interleaved $PHP$ orthant, where the parasite split happens inside of host $A$. For the interleaved orthant the $\sigma_3^P=0$ boundary is a cospeciation boundary of the top parasite split $12|3$ with $A|B$. The cone point $\sigma_2=\sigma_3=0$ corresponds to the cospeciation of the multifurcation of the parasite with the host split.
\end{example}
\begin{figure}[h!]
    \centering
	    \begin{tikzpicture}[scale=1]
        \node at (-3.2,0)  {\includegraphics[width=0.6\textwidth]{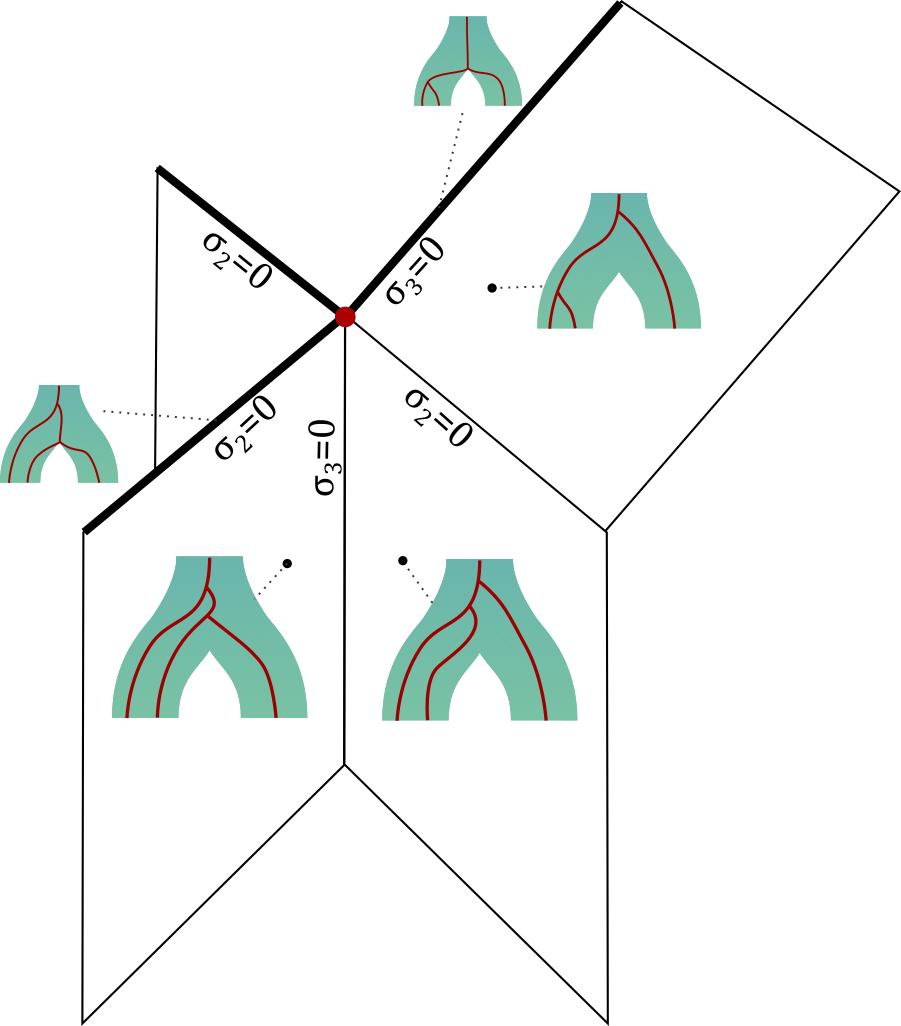}};
        \node at (-5.5,0) {$HPP$};
     \node at (-3,0) {$HPP$};
             \node at (-5.5,1.7) {$HPP$};
             \node at (-1.8,1.5) {$PHP$};
\end{tikzpicture}
  \caption{Illustration of the space $\Sigma^0_{2,3,(12)}$. Cospeciation boundaries are marked with a thick black line and the origin is marked with a red dot. Full description in \cref{ex:twoinone}.}
    \label{fig:twoinone}
\end{figure}

\begin{example}[$\Sigma_{3,2}$]
We now study the nested tree space for three hosts and two parasites, with an injective leaf map. Interestingly, this space turns out to be homeomorphic to the space $\Sigma_{2,3,(12)}$ studied before.
Suppose the leaf map sends 1 to $A$ and 2 to $B$. There are three decoupled $HHP$ tree topologies and one interleaved $HPH$ topology. The $\sigma_3^H=0$ boundaries of the decoupled orthants correspond to a cospeciation boundary of $1|2$ with $A|BC$, $AC|B$ and a rank change respectively, with the interleaved orthant attached along the rank change. The interleaved orthant has a cospeciation boundary of $1|2$ with $A|B$ at $\sigma_2^P=0$.
    \end{example}

\begin{example}[$\Sigma_{3}$]\label{ex:S3}

We now consider the case with 3 parasites, bijectively mapped to 3 hosts, see Fig. \ref{fig:treesinS3}. We restrict to describing the link of the origin in $\Sigma_{3}^0$ (Fig. \ref{fig:S3}).
The link of the origin is made up of one triangle in every three-dimensional orthant, with boundaries on the $\sigma_i=0$ planes for $i=2,3,4$. Both the parasite and host tree can have three topologies, denoted $A$, $B$ and $C$. We call the tree topologies of the host and parasite \emph{matching} if the leaf mapping gives a bijection on the splits of the two trees.
The link is glued from $3+3\cdot3 = 12$ triangles in the following way. Each of the three matched topologies from the interleaved ($HPHP$) orthants is connected by a rank exchange to a single decoupled ($HHPP$) orthant with matching topology. The matching decoupled topologies are then glued along their other two boundaries to 6 non-matching topologies related by an NNI in either the host or the parasite tree. The domain of perfect cospeciation is indicated by the three black dots, homeomorphic to the link of $\T_3$.

\end{example}

\begin{figure}[h!]
    \centering
           \includegraphics[width=\textwidth]{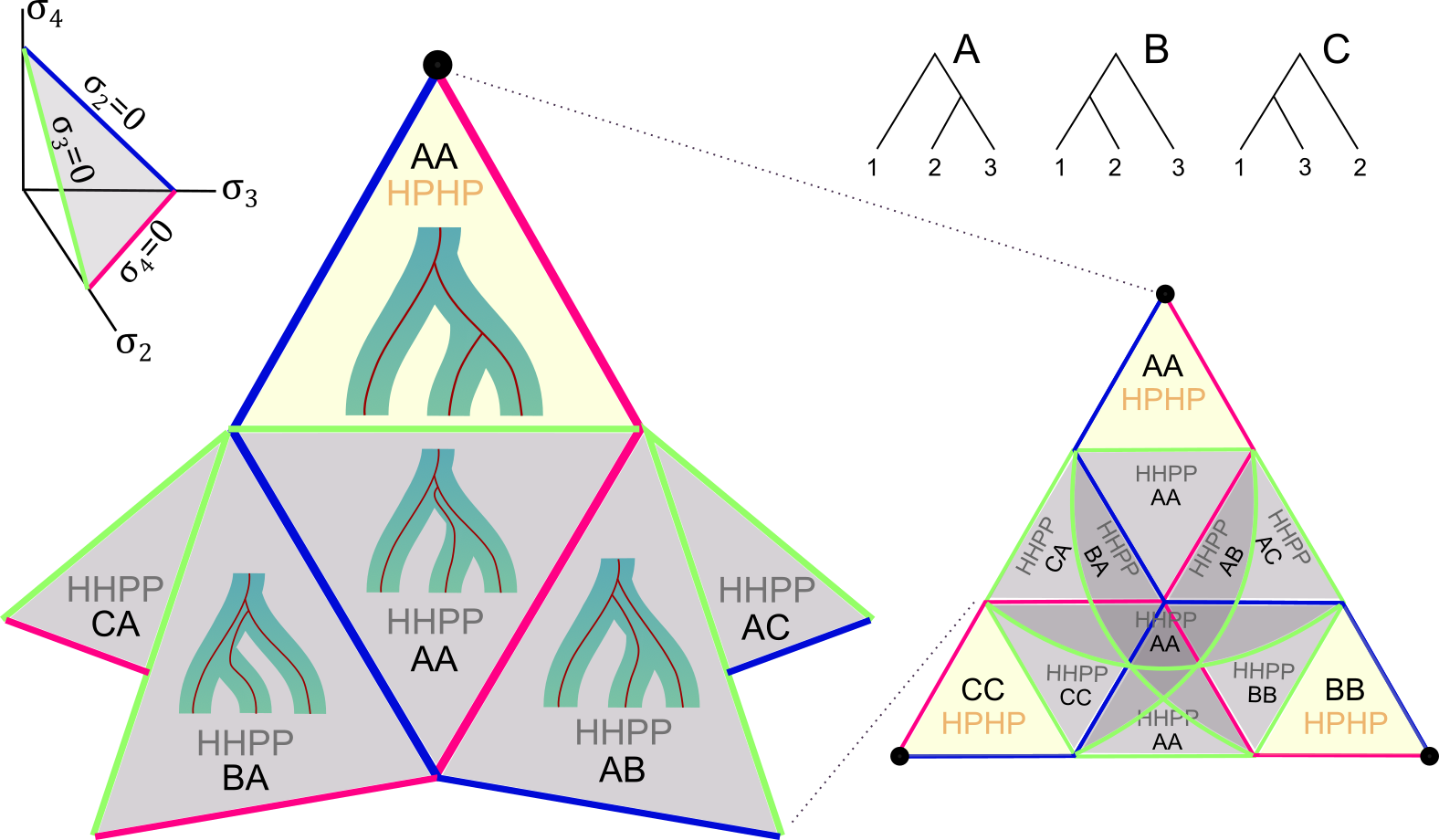}
        \caption{Illustration of the link of the origin in $\Sigma_3^0$. The link is composed of twelve triangles, of which three correspond to interleaved ($HPHP$) orthants with matching parasite and host tree topology and nine to decoupled ($HHPP$) orthants. All external faces are cospeciation boundaries. The domain of perfect cospeciation is indicated by three black dots.       
        Full description in \cref{ex:S3}.}
        \label{fig:S3}
\end{figure}

\subsection{Topological and geometric properties of $\Sigma$}
It is possible to define a space of ultrametric nested trees of type $(n,m,\ell)$ as a subspace of a product of $\tau$-spaces of ultrametric trees $\T_n \times \T_m$, where we restrict the fiber over $T\in \T_H$ to parasite trees that fit inside $T$ as a reconciled tree respecting the leaf map $\ell$.

Every nested phylogenetic tree admits two forgetful maps $F_H$ and $F_P$, which pointwise on the nested tree combine to a map $(F_H, F_P):(T_H, T_P, \ell) \rightarrow (T_H, T_P)$.  
These maps extend to continuous maps of tree spaces.

The $\tau$-space coordinates of $F_H(nrt,\underline{\sigma})$ and $F_P$ can be computed as follows. 
    Let $(T_H,T_P,\ell)$ be a generic nested tree, with coordinates $(nrt, \underline{\sigma})$ and nesting sequence $S_i.$ Let $i^P\subset [n+m-2]=\{1,2, \dots n+m-2\}$ be the subsequence of indices $i$ such that $S_{j} = P$ for $j \in i^P,$ and similarly define the complementary subsequence $i^H$ for $H$. We then define 
    \[\tau^P_j:= \sum_{i^P_{j-1}<k\leq i^P_j}\sigma_j,\]
    where $i^P_0 = 0$ by convention. Define $\underline{\tau}^H$ analogously.

Now we have the forgetful maps
\begin{align*}
    &F_P: \Sigma_{n,m,\ell} \rightarrow \T_m &F_H:\Sigma_{n,m,\ell}\rightarrow \T_n\\
    &(nrt, \underline{\sigma}) \mapsto (rt_P, \underline{\tau}^P) & (nrt,\underline{\sigma}) \mapsto (rt_H,\underline{\tau}^H)
\end{align*}
These combine to a parametrized map
\[
F:\Sigma_{n,m,\ell} \xrightarrow{\;(F_H, \,F_P)\;} \T_n \times \T_m.
\]
The forgetful map is injective since by \cref{def:nestedtree} a nested phylogenetic tree is completely determined by the host tree, the parasite tree, and the parasite map $\ell$, which we fix. 
    The image of $F$ map corresponds exactly to those pairs of trees that are host-parasite compatible along $\ell$.
    
   There is also an adjoint map $i_\ell:(T_H,T_P)\to (T_H,T_P,\ell),$ defined on $im (F)$, where $i_\ell$ is restricted to host-parasite compatible trees. We see immediately that $i_\ell\circ F = id_{\Sigma_{n,m,\ell}}$.

\begin{lemma}
    $\Sigma_{n,m,\ell}$ is contractible.
\end{lemma}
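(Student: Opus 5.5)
We prove contractibility by exhibiting $\Sigma_{n,m,\ell}$ as a cone over the cone point $\mathbf{0}$ and contracting radially. Define
\[
H:\Sigma_{n,m,\ell}\times[0,1]\to\Sigma_{n,m,\ell},\qquad H\big((nrt,\underline{\sigma}),s\big)=(nrt,(1-s)\underline{\sigma}),
\]
so that $H(\cdot,0)=\mathrm{id}$ and $H(\cdot,1)$ is the constant map to the cone point. Biologically, $H$ rescales all speciation times of both host and parasite by the common factor $(1-s)$.

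First we check that $H$ is well defined. Scaling by $(1-s)>0$ preserves which $\sigma$-coordinates vanish. Hence it preserves the face of the orthant containing a point, and therefore the (possibly unresolved) nested ranked topology. If a point lies on a shared face of several orthants, the scaled point lies on the same shared face in each of them. Since gluings identify faces with the same $nrt$ via a bijective reindexing of coordinates, and scaling commutes with such reindexing, the definitions agree across orthants. Host-parasite compatibility is also preserved, since \eqref{eqn:metricmatrix} is homogeneous under scaling both $d^H$ and $d^P$. At $s=1$ every point goes to the cone point, which lies in the closure of every orthant.

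Next we verify continuity. On each closed orthant $\overline{o}\times[0,1]$, $H$ is the restriction of the continuous linear map $(\underline{\sigma},s)\mapsto(1-s)\underline{\sigma}$ into $\overline{o}$. Continuity on the whole space then follows from the quotient (gluing) topology, because the maps agree on overlaps. Since there are finitely many orthants, $\Sigma_{n,m,\ell}\times[0,1]$ carries the quotient topology of $\bigsqcup_o \overline{o}\times[0,1]$. Alternatively we can work directly with the metric $g$. Within an orthant $g$ is Euclidean, so $g\big(H(x,s),H(x,s')\big)\le |s-s'|\,|\underline{\sigma}|$. For any path $\alpha$ from $x$ to $y$, scaling gives a path from $H(x,s)$ to $H(y,s)$ of length multiplied by $(1-s)$. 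Hence $g\big(H(x,s),H(y,s)\big)\le(1-s)\,g(x,y)$, and joint continuity follows from the triangle inequality.

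The main obstacle is the point-set issue of checking that the metric topology of \cref{def:sigmaspace} agrees with the quotient topology of the cube complex, or at least that $H$ is continuous for $g$. We handle it through the Lipschitz estimates above, which avoid comparing the two topologies at all. Everything else is a direct check that scaling respects the face identifications.
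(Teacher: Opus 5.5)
Your proposal is correct and takes the same approach as the paper: the straight-line homotopy $\underline{\sigma}\mapsto(1-s)\underline{\sigma}$ to the cone point, which is compatible with the face identifications and hence continuous on shared faces. You also supply details the paper leaves implicit, namely why scaling preserves the nested ranked topology and host-parasite compatibility, and the Lipschitz estimate giving continuity with respect to $g$.
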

\begin{proof}
    Every orthant and lower-dimensional face corresponds to a nested ranked topology,
    possibly with degeneracies or simultaneous speciations. Nevertheless, each is parametrized by its non-zero coordinates, and is contractible along the straight line homotopy shrinking $\underline{\sigma}\to \underline{0},$ which is continuous on shared faces.
\end{proof}

\begin{proposition}\label{prop:no3-cycle}
    The link of the origin in $\Sigma_{n,m,\ell}$ does not have 3-cycles, meaning that there are no three top-dimensional orthants ($\cong\Rp^{n+m-2}$) that pairwise share an $n+m-3$-dimensional facet.  
\end{proposition}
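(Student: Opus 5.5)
Let $\mathcal O_1,\mathcal O_2,\mathcal O_3$ be three top-dimensional orthants that pairwise share a facet. We aim for a contradiction by classifying codimension-1 adjacencies, following the $\tau$-space argument of \cite{ultra}. A shared facet of two resolved nested ranked topologies arises when exactly one coordinate $\sigma_i$ vanishes, i.e. two consecutive events $i-1,i$ in the combined ranking become simultaneous. By the discussion preceding the statement, there are three kinds of such degeneration:
\begin{itemize}
\item[(a)] a rank change of two events (two $H$'s, two $P$'s, or an $H$ and a $P$, the last being a covering relation $S\lessdot S'$ in $(\mathcal{NS},\leq)$);
\item[(b)] an NNI in $rt_H$ or in $rt_P$, where a node of in-degree 3 is resolved in three ways;
\item[(c)] a cospeciation collapse $\sigma_i^{P^c}=0$. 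This case is external by the lemma on cospeciation boundaries, so it never contributes an adjacency.
\end{itemize}

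First I will record how each facet determines its neighbours. The degenerate nested topology on the facet $\sigma_i=0$ has exactly one non-binary feature at rank $i$. Its resolutions are either the two orders of two distinct nodes (type (a), exactly two orthants), or the three binary resolutions of one in-degree-3 node in $T_H$ or $T_P$ (type (b), exactly three orthants). In every case the resolutions leave all other data (the remaining ranks, splits, and nesting labels) unchanged. One point needs care for $H/P$ rank changes: the admissibility condition of \cref{lem:nestingseq} might exclude one order. If it does, the facet is external or a cospeciation boundary and gives no adjacency.

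Next comes the core combinatorial step. Suppose $\mathcal O_1\cap\mathcal O_2$ is the facet $\sigma_i=0$ and $\mathcal O_1\cap\mathcal O_3$ is the facet $\sigma_j=0$.
\begin{itemize}
\item If $i=j$, then $\mathcal O_2,\mathcal O_3$ are both resolutions of the same degenerate topology. This forces type (b), and the three orthants are the three NNI resolutions. These form a $2$-simplex (a triangle filled by the shared codimension-1 face), so they do not form an empty 3-cycle. In the link, the three vertices meet at a single common face rather than bounding a hollow triangle. I would phrase the statement's ``3-cycle'' as a cycle not bounding a 2-cell, exactly as in Gromov's flag condition, and check this interpretation against how the result is used in \cref{thm:CAT(0)}.
\item If $i\neq j$, then $\mathcal O_2$ and $\mathcal O_3$ differ from $\mathcal O_1$ by moves at different rank positions. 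We show they cannot share a facet. Track the invariant ``$(nrt$ restricted away from positions $\{i-1,i\})$'': the pair $\mathcal O_2,\mathcal O_3$ differs at both positions $i$ and $j$. Since each move changes either the combined order or one split at a single rank, $\mathcal O_2$ and $\mathcal O_3$ differ in two independent places, while a single facet move alters only one. If $|i-j|=1$, the two moves overlap in the event at the shared rank. Then a direct case check on the three consecutive events shows the composite is a length-2 move (e.g. $HPH\to PHH$ via two rank swaps, or rank swap followed by NNI), never a single one.
\end{itemize}

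The main obstacle is this $|i-j|=1$ overlap case. It has to be enumerated over event labels $H,P$ and move types, and the nesting sequences and coupling constraints make it slightly larger than the $\tau$-space check. I would handle it by a small table of the triples of consecutive events and verify that distinct moves yield orthants at link-distance 2.
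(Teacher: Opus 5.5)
Your argument is correct in substance, but it is organised differently from the paper's. The paper sorts the three facet crossings by \emph{type} (host NNI, host rank change, parasite NNI, parasite rank change, host--parasite rank change). If all three moves act on one tree, it appeals to the corresponding statement for $\tau$-space. In the mixed cases it argues that two of the moves must undo each other, so two orthants would share two facets. You sort by \emph{position} instead, i.e.\ by which coordinate $\sigma_i$ vanishes on each shared facet. This buys you an argument that is self-contained (no appeal to $\tau$-space) and uniform across $H$ and $P$ events. It also forces you to confront the fan of three NNI resolutions around a single facet. Your reading of ``3-cycle'' as three pairwise \emph{distinct} shared facets is the one the paper's proof uses implicitly: the three NNI orthants do pairwise share a facet, also in $\tau$-space. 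It is also the reading the cube condition in the following theorem requires.

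The step you single out as the main obstacle, $|i-j|=1$, goes away if you encode an orthant by its sequence of interval states rather than by its events. Take the $k$-th state to be the pair (host partition of $L_H$, parasite partition of $L_P$) of lineages alive during the interval $\sigma_k$. Three facts then do the work:
\begin{itemize}
\item The state sequence determines $(rt_H,rt_P,S)$.
\item The $k$-th state has exactly $n+m-k+1$ blocks, so facets of different orthants can only be glued at equal index; this is the fact your ``$\sigma_i=0$ in both orthants'' implicitly uses.
\item Any orthant containing the facet $\sigma_i=0$ of $\mathcal O_1$ agrees with $\mathcal O_1$ in every state except the $i$-th.
\end{itemize}
So if $i\neq j$, then $\mathcal O_2$ and $\mathcal O_3$ differ in both the $i$-th and the $j$-th state, and cannot share a facet, whether or not $i$ and $j$ are adjacent. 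No table of consecutive event triples is needed.
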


\begin{proof}
Suppose $o_1,o_2,o_3$ are three orthants pairwise sharing a facet.
Then all $\sigma_i$ coordinates are positive in the interiors of $o_1,o_2,o_3$, and at the boundary $\partial o_{ij}$ shared by $o_i$ and $o_j$, a unique coordinate $\sigma_{c_{ij}}$ vanishes. Since $o_i$ and $o_j$ are different orthants and thus correspond to different nested ranked topologies, that means that $\sigma_{c_{ij}}$ is resolved a different ranked topology in $o_i$ and $o_j$. Each codimension 1 boundary crossing through $\partial o_{ij}$ corresponds to an action $\rho_{ij}$ on the nested ranked tree topology specifying the orthant consisting of either:
\begin{enumerate}
    \item[(H1)] an NNI in the host tree;
    \item[(H2)] a rank change in the host tree;
    \item[(P1)] an NNI in the parasite tree;
    \item[(P2)] a rank change in the parasite tree; or
    \item[(HP)] a rank change between the parasite and host trees.
\end{enumerate}
Consider the cycle $\rho_{31}\circ\rho_{23}\circ\rho_{12}:o_1\to o_2 \to o_3 \to o_1$. If all three $\rho$'s act exclusively on host (resp. parasite) tree, i.e. factor through the appropriate forgetful map $\rho = F\circ\rho\circ F^{-1}$, then this would correspond to a cycle of length 3 in the link of the host (resp. parasite) tree space, which does not exist \cite{ultra}.

If two of the $\rho$'s, say $\rho_{12}$ and $ \rho_{23}$, act on the parasite tree (P1/P2), and $ \rho_{31}$ on the host tree (H1/H2), then in order to get back to the same nested ranked topology at the end of the cycle, $\rho_{12}$ and $ \rho_{23}$ have to act inversely on the parasite tree, in which case $o_1$ and $o_2$ share the boundary $\partial o_{23}$ as well as $\partial o_{12}$, in contradiction with the assumption. Analogously for two actions on the host tree.

Lastly, if one of the transitions is a rank change between the host and the parasite trees (HP), then exactly one other transition needs to be the inverse rank change for the nested ranked topology to return to itself along the cycle. Hence again two of the orthants would share two faces, in contradiction with the assumption.

Hence we conclude that there are no 3-cycles in the link of $\sigma$-space, analogous to the result in $\tau-$ and $\BHV$ space.
\end{proof}

\begin{theorem}[Gromov cubes]\label{cube condition}
Let $X$ be a cube complex. If it is the case that every three $k+2$-cubes sharing a common $k$-cube and pairwise sharing $k+1$-cubes, are contained in a common $k+3$-cube, then $X$ is CAT(0).
\end{theorem}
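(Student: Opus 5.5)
The statement as written leaves two hypotheses implicit: simple connectivity and finite dimensionality of $X$. Without simple connectivity it fails (a cycle graph of edges satisfies the condition vacuously). So I would prove the standard form: a finite-dimensional, simply connected cube complex with unit Euclidean cubes, in which the condition above holds at every cube, is CAT(0). In our application to $\Sigma_{n,m,\ell}$ these hypotheses come from the contractibility lemma and from the finite dimension $n+m-2$. If the orthants are unbounded rather than truncated cubes, the same argument applies to every bounded truncation.

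The plan has three steps.

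\textbf{Step 1: reduce to links.} Since $X$ has finitely many shapes, it is a complete geodesic space (Bridson's theorem). By Cartan--Hadamard, a complete, simply connected, locally CAT(0) geodesic space is CAT(0). So it suffices to show $X$ is locally CAT(0). By Berestovskii's criterion for polyhedral complexes, this holds exactly when the geometric link $\mathrm{Lk}(v)$ of every vertex is CAT(1). Here $\mathrm{Lk}(v)$ is an all-right spherical simplicial complex: a $(k+1)$-cube at $v$ contributes a right-angled spherical $k$-simplex.

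\textbf{Step 2: translate the hypothesis into the flag condition.} I will show that every $\mathrm{Lk}(v)$ is flag, i.e.\ any finite set of pairwise adjacent vertices spans a simplex. This goes by induction on the size of the clique. Suppose vertices $e_1,\dots,e_r$ of $\mathrm{Lk}(v)$ are pairwise joined. By induction, any $r-1$ of them span a simplex, i.e.\ lie in a common cube. Take the $k$-cube $C$ spanned at $v$ by $e_1,\dots,e_{r-3}$, and the three $(k+2)$-cubes $C\cup\{e_a,e_b\}$ for $\{a,b\}\subset\{r-2,r-1,r\}$. These share $C$ and pairwise share $(k+1)$-cubes, so the hypothesis gives a common $(k+3)$-cube containing all $r$ edges. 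The hypothesis is applied at every cube, and links of cubes are links inside vertex links, so the same argument shows all links of simplices in $\mathrm{Lk}(v)$ are flag.

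\textbf{Step 3: flag all-right complexes are CAT(1).} This is Gromov's lemma and the step I expect to be the main obstacle. I argue by induction on dimension. Links of simplices in a flag complex are flag and of lower dimension, so by induction the complex $L$ is locally CAT(1). It then remains to show $L$ has no closed geodesic of length $<2\pi$.

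For this I would use Gromov's argument that in a flag all-right complex the open star of a vertex $w$ is $\pi$-convex. It is the open $\pi/2$-ball around $w$, and flagness prevents shortcuts through "missing simplices". A closed local geodesic $\gamma$ of length $<2\pi$ is cut into pieces lying in successive stars. One then shows $\gamma$ must lie in a single closed star $\mathrm{St}(w)$. This star is the spherical join of $w$ with $\mathrm{Lk}(w)$, and a join is CAT(1) when $\mathrm{Lk}(w)$ is CAT(1), which holds by induction. So no such short loop exists.

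If the convexity of stars proves delicate, I would fall back on Moussong's or Bridson--Haefliger's treatment (Thm.~II.5.18 with II.5.20) and cite it, since that part is standard and independent of tree spaces.

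Steps 1--3 together yield the theorem.
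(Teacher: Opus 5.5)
The paper gives no proof of this statement. It quotes it as Gromov's criterion, without a reference and without simple connectivity or finite dimension among the hypotheses, and uses it as a black box in \cref{thm:CAT(0)}.

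Your proposal supplies the standard argument: completeness from finitely many shapes, Cartan--Hadamard, the vertex-link condition, and Gromov's lemma that finite-dimensional flag all-right spherical complexes are CAT(1) (Bridson--Haefliger II.5.18, II.5.20). The step specific to this formulation is your induction in Step 2, which turns the three-cube condition into the clique condition on $\mathrm{Lk}(v)$, and it is correct. Your objection that simple connectivity must be added is also right, and it is a genuine correction: as printed the statement is false. In the application, that input comes from the paper's contractibility lemma, which the paper proves but never invokes at this point.

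There is one more hidden hypothesis in your Step 2. A flag complex must first be a simplicial complex, and the three-cube condition says nothing about that. Your phrases ``the $k$-cube $C$ spanned at $v$ by $e_1,\dots,e_{r-3}$'' and ``pairwise share $(k+1)$-cubes'' presuppose that a set of edges at $v$ spans at most one cube.

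If cubes may meet in more than one face, the theorem fails even with simple connectivity. Glue two unit squares along two adjacent edges. The result is a contractible $2$-dimensional complex that satisfies the hypothesis vacuously. But the link of the common corner is a circle of length $\pi$, and the two copies of the centre point are joined by two distinct geodesics, so the space is not CAT(0).

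In Bridson--Haefliger's definition of a cubical complex, two cubes meet in at most one common face, so this issue cannot arise. That is why your citation of II.5.20 is consistent. You should still state the assumption explicitly. For $\Sigma_{n,m,\ell}$ it is a real additional check: the closures of two orthants must intersect in a single common face.

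Finally, for unbounded orthants you can avoid truncation altogether. Every face of an orthant contains the origin, so $\Sigma_{n,m,\ell}$ is the Euclidean cone over the link of the origin. By Berestovskii's cone theorem, a Euclidean cone is CAT(0) if and only if its base is CAT(1). This reduces everything to Steps 2 and 3 at the single cone point, and it makes simple connectivity automatic.
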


\begin{theorem}\label{thm:CAT(0)}
    $\sigma$-space $\Sigma_{n,m,\ell}$ satisfies Gromov's cube condition (\cref{cube condition}).
\end{theorem}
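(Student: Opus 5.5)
We plan to verify Gromov's cube condition by reducing it to a statement about links, as in $\tau$-space and BHV space. Let $C$ be a $k$-cube of $\Sigma_{n,m,\ell}$, and let $Q_1,Q_2,Q_3$ be $(k+2)$-cubes containing $C$ that pairwise share $(k+1)$-cubes. Every cube of $\Sigma_{n,m,\ell}$ is a face of an orthant. It therefore corresponds to a (possibly degenerate) nested ranked topology $(rt_H,rt_P,\ell,S)$, and it is parametrised by the nonzero $\sigma$-coordinates. Passing to the link of $C$ replaces the configuration by three vertices, the directions of $Q_1,Q_2,Q_3$, that are pairwise joined by edges, the directions of the shared $(k+1)$-cubes. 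The condition then becomes the flag-type statement that every such triangle in the link is filled by a $2$-simplex, namely a $(k+3)$-cube.

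\textbf{Step 1 (local product structure).} We describe the star of $C$. The face $C$ fixes which coordinates are positive, and so it partitions the combined timeline into blocks $\mathcal{S}_1\dots\mathcal{S}_K$. These blocks are the simultaneous events at each time $t_k$, i.e.\ the multifurcations and coincident host/parasite events. Orthants containing $C$ are obtained by resolving each block independently. We will argue that the link of $C$ is the join of the links of the resolutions of the individual blocks. A block resolution is a local resolution of host multifurcations and parasite multifurcations, together with a choice of interleaving of the $H$ and $P$ events inside the block. This interleaving must respect the admissibility inequality \cref{eq:nestingcond} of \cref{lem:nestingseq}, applied locally, and the coupling constraints that make some faces cospeciation boundaries, which are external.

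\textbf{Step 2 (reduce to a single block and to the origin).} A join of flag complexes is flag, and a triangle in a join with vertices in different join factors is automatically filled. So it suffices to treat a triangle whose three vertices lie in one block factor. A single block is itself a smaller nested-tree resolution problem: its incoming lineages play the role of leaves, with an induced leaf map. Up to products with $\Rp$ factors, its link is then the link of the origin of some $\Sigma_{n',m',\ell'}$, possibly with some faces removed as cospeciation boundaries. Here we would apply \cref{prop:no3-cycle}. In the link of the origin of $\Sigma_{n',m',\ell'}$, three top-dimensional orthants cannot pairwise share facets. In the setting of the cube condition, the three $(k+2)$-cubes are top-dimensional within the star of $C$ restricted to that block, so the triangle condition holds vacuously. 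If the block is not fully ``top-dimensional'' because other blocks are only partially resolved, we iterate the join decomposition.

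\textbf{Main obstacle.} The hardest step is Step 1: justifying that the star of a face really decomposes as a product of independent block resolutions. One must check that resolving one block never changes which resolutions are admissible in another. This requires that the prefix-count inequality \cref{eq:nestingcond} and the coupling condition are determined block by block, given the fixed coarse data of $C$. We would check this by showing that the number of parasite lineages per host lineage at the boundary between blocks is fixed by $C$, via the canonical admissible nesting sequence, and that \cref{lem:upset} supplies the needed rank-change freedom inside each block. A subtle point is that external cospeciation boundaries only delete cubes. They can break the product structure only by making some product cube missing, so we would verify that a missing cube is always detected by a missing edge, i.e.\ that a cospeciation degeneracy already appears on a codimension-one face. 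Once Step 1 holds, combining it with \cref{prop:no3-cycle} and \cref{cube condition} yields that $\Sigma_{n,m,\ell}$ is CAT(0).
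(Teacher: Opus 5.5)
\textbf{The gap is in Step 2.} \cref{prop:no3-cycle} only excludes three \emph{top-dimensional} orthants pairwise sharing facets. That is exactly the case $k=n+m-4$ of the cube condition, where no $(k+3)$-cube exists and the configuration must simply not occur. You claim that after restricting to one block the three $(k+2)$-cubes are top-dimensional in the star of $C$, but this is false in general.

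\begin{itemize}
\item Take $C$ to be the cone point. Then $k=0$, the join decomposition is trivial (there is only one block), and $Q_1,Q_2,Q_3$ are $2$-cubes in a complex of dimension $n+m-2$. Iterating the decomposition cannot change this.
\item Such triples are not vacuous. In any orthant of dimension at least $3$, three coordinate $2$-faces pairwise share rays and are filled by a $3$-face. The cube condition asks that \emph{every} such triple be filled, including triples not visibly lying in a common orthant, and \cref{prop:no3-cycle} says nothing about them.
\item The confusion comes from your inverted dictionary. In $\mathrm{lk}(C)$, the $(k+1)$-cubes are the vertices and the $(k+2)$-cubes are the edges, so the condition is about empty triangles in links. \cref{prop:no3-cycle} is instead a statement about the dual graph of maximal cells. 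The two coincide only when $k+2$ is the top dimension.
\end{itemize}

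\textbf{What is missing} is the filling construction for $k<n+m-4$, which is the actual content of the paper's proof. The shared $(k+1)$-cube of $o_2$ and $o_3$ has $\sigma_{c_{\{2,3\}}}>0$, so $o_2$ and $o_3$ resolve the corresponding tie in the same way. Taking $o_1$ and additionally resolving that tie in the same way produces a $(k+3)$-cube containing all three.

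In other words, you must show that three pairwise compatible refinements of the degenerate nested ranked topology of $C$ are jointly compatible. This includes admissibility of the resulting nesting sequence under \cref{lem:nestingseq} and the coupling/cospeciation constraints. Your Step 1, which you flagged as the main obstacle, is not where the difficulty lies. Even granting the join decomposition, this joint-compatibility statement still has to be proved inside a single block, and your proposal never does so.
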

\begin{proof}
Let $k\leq n+m-4$ and suppose $o_1,o_2,o_3$ are $k+2$-cubes sharing a single common $k$-cube and pairwise sharing $k+1$-cubes. 
By \cref{prop:no3-cycle}, there is no such situation in which $o_1,o_2,o_3$ are top-dimensional, so $k< n+m-4$. We will now argue that in this case we can construct a $k+3$-cube filling the corner formed by the three $k+2$-cubes.
In this situation there are $k$ positive $\{\sigma_{c_i}\}_{i=1,\dots,k}$ dimensions common to $o_1,o_2,o_3,$ and a single additional $\sigma_{c_{i^c}}>0$ shared by each pair $\{o_j\}_{j \in i^c}$, for $i^c=\{1,2,3 \}\setminus i$, which vanishes on $o_i$.
As $\sigma_{c_{\{2,3\}}}>0$ on $o_2$ and $o_3$ as well as their shared boundary, the coordinate is resolved in the same way in both cubes, by which we mean that the nested ranked tree topologies corresponding to $o_2$ and $o_3$ are homeomorphic within the non-zero time interval between nodes $c_{\{2,3\}}$ and $c_{\{2,3\}}-1$. This nodes are at the same height in $o_1$. We can now construct a $k+3$-cube containing $o_1,o_2,o_3$ by taking points in $o_1$ and resolving $\sigma_{c_{\{2,3\}}}$ in the way it is resolved in $o_2$ and $o_3$.
\end{proof}

\begin{corollary}
    $\sigma$-space is CAT(0). It is a connected length space with unique geodesics.
\end{corollary}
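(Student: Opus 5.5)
The corollary follows from three facts: $\Sigma_{n,m,\ell}$ is a connected cube complex, it is contractible, and it satisfies Gromov's condition. I will combine these in the standard way for $\tau$- and $\BHV$-space.

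\emph{Setup.} Bound all $\sigma$-coordinates by a large constant so that $\Sigma_{n,m,\ell}$ is a finite-dimensional cube complex with the piecewise Euclidean metric $g$. The bound is irrelevant for geodesics between any fixed pair of points: rescaling toward the cone point shows the unbounded space is a cone over a truncation. The complex is finite-dimensional, of dimension $n+m-2$, and has finitely many shapes of cells, so Bridson's theorem applies. Hence $(\Sigma_{n,m,\ell},g)$ is a complete geodesic metric space, and the path-length infimum in the definition of $g$ is realised by a path.

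\emph{Local step.} By \cref{thm:CAT(0)}, $\Sigma_{n,m,\ell}$ satisfies Gromov's cube condition. This is equivalent to the link of every vertex being a flag simplicial complex. By Gromov's link condition the complex is therefore locally CAT(0), i.e. non-positively curved.

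\emph{Local to global.} The complex is path-connected by the lemma above and contractible by the contractibility lemma, so it is in particular simply connected. The Cartan--Hadamard theorem for complete geodesic spaces then upgrades locally CAT(0) to globally CAT(0). Uniqueness of geodesics follows because in a CAT(0) space the distance function is convex along geodesics, so two distinct geodesics with the same endpoints would bound a comparison triangle that violates the CAT(0) inequality. That $\Sigma_{n,m,\ell}$ is a length space is immediate from the definition of $g$ as a path-length infimum.

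\emph{Main obstacle.} The only delicate point is applying Gromov's criterion to this complex. \cref{cube condition} is stated globally, but the flag condition has to be checked at every vertex of the cubical subdivision, not only at the cone point. With the coordinate bound, the other vertices are points where some $\sigma_i$ equal the bound. At such a vertex the link decomposes as a join of a link coming from the zero coordinates with a simplex coming from the saturated coordinates. The first factor is the link of a face of the cone point and is flag by the same argument as \cref{thm:CAT(0)}; a join of flag complexes is flag. I would also check that the external cospeciation boundaries cause no trouble, since they only remove cubes and never create missing corners.
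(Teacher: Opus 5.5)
Your proposal is correct and follows the paper's route: the corollary is read off from \cref{thm:CAT(0)} combined with Gromov's criterion (\cref{cube condition}). Your additions fill in details the paper omits rather than change the approach. These are using the contractibility lemma to get simple connectivity before applying Cartan--Hadamard, a hypothesis missing from the paper's statement of \cref{cube condition}, and checking flagness at the non-origin vertices of the truncated complex via the join decomposition.
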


As in the classical tree space literature (e.g. \cite{BHV}), we note that the non-positive curvature implies the existence of unique geodesics, which in turn can be used to compute Fr\'echet means of point clouds, as well as further geometric statistics. 

 \bibliographystyle{amsalpha}
  \bibliography{bib}

\end{document}